\newtheorem{theorem}{\bf Theorem}[section]
\newtheorem{lemma}[theorem]{\bf Lemma}
\newtheorem{proposition}[theorem]{\bf Proposition}
\newenvironment{proof}{\noindent{\em Proof:}}{\quad \hfill$\Box$\vspace{2ex}}
\def \bN {\Bbb N}
\def \bZ {\Bbb Z}
\def \bR {\Bbb R}
\def \bC {\Bbb C}
\def \bc {{\bf c}}
\def \cA {{\cal A}}
\def \cB {{\cal B}}
\def \cH {{\cal H}}
\def \cT {{\cal T}}
\def \cE {{\cal E}}
\def \cW {{\cal W}}
\def \and {\, \mbox{\rm and}\, }
\def \sinc {\,{\rm sinc}\,}
\def \span {\,{\rm span}\,}
\def \dist {\,{\rm dist}\,}
\newcommand{\Rmnum}[1]{\expandafter\@slowromancap\romannumeral #1@}
\begin{document}
\title{\bf Exponential Approximation of Bandlimited Random Processes from Oversampling
 \thanks{Supported by Guangdong
Provincial Government of China through the ``Computational Science
Innovative Research Team" program.}}
\author{Wenjian Chen\thanks{School of Mathematics
and Computational Science, Sun Yat-sen University, Guangzhou 510275,
P. R. China. E-mail address: {\it wenjianchen66@gmail.com}.}\quad and \quad
Haizhang Zhang\thanks{School of Mathematics and Computational
Science and Guangdong Province Key Laboratory of Computational
Science,
 Sun Yat-sen University, Guangzhou 510275, P. R. China. E-mail address: {\it zhhaizh2@mail.sysu.edu.cn}. Supported in part by Natural Science Foundation of China under grants 11222103, 11101438 and 91130009, and by the US Army Research
 Office.}}
\date{}
\maketitle
\begin{abstract}
The Shannon sampling theorem for bandlimited wide sense stationary random processes was established in 1957, which and its extensions to various random processes have been widely studied since then. However, truncation of the Shannon series suffers the drawback of slow convergence. Specifically, it is well-known that the mean-square approximation error of the truncated series at $n$ points sampled at the exact Nyquist rate is of the order $O(\frac1{\sqrt{n}})$. We consider the reconstruction of bandlimited random processes from finite oversampling points, namely, the distance between consecutive points is smaller than the Nyquist sampling rate. The optimal deterministic linear reconstruction method and the associated intrinsic approximation error are studied. It is found that one can achieve exponentially-decaying (but not faster) approximation errors from oversampling. Two practical reconstruction methods with exponential approximation ability are also presented.

\noindent{\bf Keywords:} bandlimited random processes, oversampling, the intrinsic approximation error, exponential decayness, reproducing kernels

\noindent {\bf 2010 Mathematical Subject Classification: 62D05, 60G10}
\end{abstract}

\section{Introduction}
\setcounter{equation}{0}
The purpose of this paper is to establish exponentially approximating reconstruction methods from finite oversampling for bandlimited wide sense stationary random processes. The motivation comes from the slow convergence of the truncated Shannon series for random processes and the recent progress in the study of oversampling for bandlimited deterministic signals.

The foundational Shannon sampling theorem \cite{C. E. Shannon,E. T. Whittaker} states that every bandlimited deterministic signal can be completely recovered from its function values on sampling points equally-spaced at the exact Nyquist rate. We introduce the details for later discussion. Let $\cB_\delta$, $\delta>0$, denote the Paley-Wiener space of functions $f\in C(\bR)\cap L^2(\bR)$ with Fourier transform supported on $[-\delta,\delta]$. The Fourier transform is defined for $f\in L^1(\bR)$ by
$$
\hat{f}(\xi):=\frac1{\sqrt{2\pi}}\int_{\bR}f(t)e^{-it\xi}dt,\ \ \xi\in\bR,
$$
and for tempered distribution by a duality principle \cite{GW}. The Nyquist rate for the bandwidth $\delta$ is $\pi/\delta$. For simplicity, we discuss $\cB_\pi$ whose Nyquist rate is $1$. The Shannon sampling formula is
\begin{equation}\label{shannon}
f(t)=\sum_{j\in\bZ}f(j)\sinc(t-j),\ \ t\in\bR,\ f\in\cB_\pi,
\end{equation}
where the series converges absolutely and uniformly on $\bR$. Here, $\sinc$ denotes the sinc function
$$
\sinc(t):=\frac{\sin\pi t}{\pi t},\ \ t\in\bR.
$$

In practice, only finite and localized sampling data are available. For instance, to reconstruct the values of $f\in\cB_\pi$ on $[0,1]$, one may only have at hand the sample data $f(J_n)$. Here, $J_n:=\{j\in\bZ:-n+1\le j\le n\}$. In this case, it has long been observed that directly truncating the Shannon series (\ref{shannon}) suffers the drawback of slow convergence \cite{H. D. Helms and J. B. Thomas,D. Jagerman,Charles A. Micchelli}. Specifically, there exist some positive constants $C_1,C_2$ such that
$$
\frac{C_1}{\sqrt{n}}\le\sup\biggl\{\biggl|f(t)-\sum_{j\in J_n}f(j)\sinc(t-j)\biggr|:t\in[0,1],\ f\in\cB_\pi,\ \|f\|_{L^2(\bR)}\le 1\biggr\}\le \frac{C_2}{\sqrt{n}}.
$$

The Shannon sampling theorem and its extensions have been established for bandlimited wide sense stationary random processes \cite{Balakrishnan,Beutler,Brown,Houdre,Kolmogorov,Piranashvili,Seip,Song}. A random process $X = X(t, \omega)$ on
an underlying probability space $(\Omega, \mathscr{F},
P)$ is said to be {\it weak sense stationary (w.s.s.)}
if $X(t,\cdot)\in L^2(\Omega,dP(\omega))$ for each $t\in\bR$, that
is,
\begin{equation}\label{weak}
\parallel X(t,\cdot) \parallel_{L^2(\Omega,dP(\omega))} \ := \bigg(\int_{\Omega}\mid X(t, \omega) \mid^{2}
dP(\omega)\bigg)^{\frac{1}{2}} < +\infty, \ \ \ \ t\in \mathbb{R},
\end{equation}
and its autocorrelation function
\begin{equation}\label{RXfeature}
R_X(t,\tau)=\int_\Omega X(t,\omega)\overline{X(\tau,\omega)}dP(\omega),\ t,\tau\in\bR
\end{equation}
depends on $t-\tau$ only. It implies that
$$
R_X(t,\tau)=R_X(t-\tau,0).
     $$
We often abbreviate $R_X(t,0)$ as $R_{X}(t)$. A w.s.s. random
process $X(t, \omega)$ is said to be bandlimited if $R_X$ belongs to $\mathcal{B}_{\delta}$ for some $\delta>0$.

The Shannon sampling theorem for random processes, first proved in \cite{Balakrishnan}, states that for a w.s.s random process $X(t,\omega)$ with $R_X\in \cB_\pi$,
\begin{equation}\label{shannonrandom}
X(t,\omega)=\sum_{j\in\bZ}X(j,\omega)\sinc(t-j),\ \ t\in\bR.
\end{equation}
The above equation holds in the sense
\begin{equation}\label{meantrincation}
\lim_{n\to\infty}E\biggl|X(t,\omega)-\sum_{j\in J_n}X(j,\omega)\sinc(t-j)\biggr|^2=0,\ \ t\in\bR,
\end{equation}
where the expectation is taken with respect to the probability measure $P$. When using finite sampling points at the exact Nyquist rate, truncating the Shannon series (\ref{shannonrandom}) also results in a very slow reconstruction. The expectation in (\ref{meantrincation}) is of the order $O(\frac1{\sqrt{n}})$ as well, \cite{Beutler}.

For bandlimited deterministic signals, dramatic improvement of the approximation order appears suddenly if oversampling data are used. For simplicity, we let $f\in\cB_\delta$ with $\delta<\pi$. Thus, integer sampling points are equally-spaced by $1< \pi/\delta$ and hence constitute oversampling points. Modified Shannon series of the form
$$
\sum_{j\in J_n}f(j)\sinc(t-j)\phi(t-j)
$$
were proposed to reconstruct $f(t)$, $t\in[0,1]$ from its oversampling data $f(J_n)$. By setting
\begin{equation}\label{weightjagerman}
\phi(t):=\sinc^{m}(\frac{\pi - \delta}{\pi m}t), \ \ t\in\bR
\end{equation}
and optimizing about the parameter $m$, a reconstruction error bounded by
\begin{equation}\label{Jagermanresult}
\frac{C}{n(\pi-\delta)} \exp(-\frac{\pi - \delta}{e}n),\ \ n\ge \frac{e}{\pi-\delta}
\end{equation}
was established in \cite{D. Jagerman}. By choosing $\phi$ to be a Gaussian function with an optimal variance, references \cite{L. Qian 1,L. Qian 2,L. Qian 3} obtained an approximation error bounded by
$$
C\sqrt{\frac n{\pi-\delta}} \exp(-\frac{\pi - \delta}{2}n),\ \ n\ge \frac{8\pi}{(\pi-\delta)^3}.
$$
By optimizing the weight function $\phi$, it was proved in \cite{Charles A. Micchelli} that one can achieve the following upper bound for reconstruction errors
$$
\frac{C}{\sqrt{n}} \exp(-\frac{\pi - \delta}{2}n),\ \ n\in\bN.
$$
The optimal weight was later found to be a spline function in the Ph.D. thesis \cite{Haizhang Zhang}.

We believe that oversampling should always lead to exponential approximation for bandlimited signals. This paper aims to reveal this phenomenon for bandlimited random processes. We shall apply the modified Shannon series approach
\begin{equation}\label{modifiedshannon}
\sum_{j\in J_n}X(j,\omega)\sinc(t-j)\phi(t-j).
\end{equation}
However, directly applying the methods in \cite{D. Jagerman,Charles A. Micchelli,L. Qian 1} may not yield satisfactory error estimates. For example, we shall see in Section 3 that by directly using the result in \cite{D. Jagerman}, one will get an approximation error bounded by
$$
 C\sqrt{\frac{\ln
n}{n(\pi-\delta)}} \exp(-\frac{\pi - \delta}{2e}n).
$$
Compared to (\ref{Jagermanresult}), the dominating exponential part degenerates. To overcome this, careful analysis will be carried out in Section 3 to show that using the optimal spline weight function given in \cite{Haizhang Zhang} leads to an approximation error bounded by
$$
\sqrt{\frac{121}{200}}e^{3/4}
\sqrt{2+\pi-\delta}(\frac{\ln n+1}{2n})^{1/4}\exp\big(-\frac{\pi-\delta}2n\big).
$$

Before estimating the approximation error of the above two explicit and practical approximation methods, we shall discuss in Section 2 the optimal deterministic linear method for the reconstruction of a bandlimited w.s.s. random process from oversampling. We will see that the optimal linear method is impractical as it requires the solving of a highly ill-posed linear system. Our main purpose is to estimate the associated intrinsic approximation error, providing us guidelines and expectation in constructing practical reconstruction methods. This question is closely related to the optimal reconstruction in a reproducing kernel Hilbert space. Thanks to existing results on reproducing kernels, an upper and lower bound estimates will be established. In particular, the lower bound estimate reveals that there does not exist a deterministic linear reconstruction method with an approximation error tending to zero faster than exponential decayness.

\section{Optimal Linear Reconstruction and Intrinsic Errors}
\setcounter{equation}{0}

In this section, we consider the best possible linear method of reconstructing a w.s.s. random signal bandlimited to $[-\delta,\delta]$ with $\delta<\pi$ from its values on finite integer points. We shall find that this is closely related to the question of optimal reconstruction of functions in a reproducing kernel Hilbert space from its finite samples. For this reason, we first introduce the notion of reproducing kernels and reproducing kernel Hilbert spaces \cite{Aronszajn}.

A function $K:\bR\times\bR\to\bC$ is said to be {\it positive-definite} if for all finite distinct points $\cT:=\{t_j:1\le j\le n\}\subseteq \bR$, the $n\times n$ matrix
$$
K[\cT]:=[K(t_j,t_k):1\le j,k\le n]
$$
is hermitian and positive semi-definite. A positive-definite function $K$ on $\bR\times \bR$ corresponds uniquely to a reproducing kernel Hilbert space denoted as $\cH_K$ such that $K(t,\cdot)\in\cH_K$ for all $t\in \bR$ and
\begin{equation}\label{reproducing}
f(t)=\langle f,K(t,\cdot)\rangle_{\cH_K}\mbox{ for all }f\in\cH_K\mbox{ and }t\in\bR,
\end{equation}
where $\langle\cdot,\cdot\rangle$ denotes the inner product on $\cH_K$. By virtue of the above equation, $K$ is also called the {\it reproducing kernel} of $\cH_K$. Reproducing kernels are widely applicable to machine learning \cite{ScSm}. Besides positive-definiteness, another characterization \cite{CuckerSmale} of a reproducing kernel $K$ is the existence of a {\it feature map} $\Phi$ from $\bR$ to some Hilbert space $\cW$ such that
$$
K(t,\tau)=\langle\Phi(t),\Phi(\tau)\rangle_{\cW},\ \ t,\tau\in\bR.
$$

A typical class of reproducing kernel Hilbert spaces is the Paley-Wiener spaces $\cB_\delta$ of bandlimited functions. With the inner product
$$
\langle f, g \rangle_{\mathcal {B}_{\delta}}:= \langle f, g
\rangle_{L^{2}(\mathbb{R})} := \int_{\mathbb{R}}
f(x)\overline{g(x)}dx,
$$
$\mathcal {B}_{\delta}$ is a reproducing kernel
Hilbert space with the reproducing kernel
\begin{equation}\label{kdelta}
K_\delta(x, y) := \frac{\sin \delta(x - y)}{\pi(x - y)}, \ \ \ x, y\in
\mathbb{R}.
\end{equation}
Note that $K_\pi(x,y)=\sinc(x-y),\ x,y\in\bR$. The Shannon sampling theorem can be proved by noticing that
$$
{\{\sinc(\cdot - j): \ j\in \mathbb{Z}}\}
$$
is an orthonormal basis for $\mathcal {B}_{\pi}$. This basis together with the reproducing property (\ref{reproducing}) yields the following useful identity
\begin{equation}\label{following}
\sum_{j\in \mathbb{Z}} \mid f(j) \mid^{2} \ = \|f\|_{L^2(\bR)}^2,\ \ f\in \cB_\pi.
\end{equation}

Turning to the random case, we observe for a w.s.s. random process $X(t,\omega)$ that $R_X$ is a reproducing kernel on $\bR\times \bR$ because it has the feature map $\Phi(t):=X(t,\omega)\in L^2(\Omega,dP(\omega))$. Furthermore, it is translation-invariant in the sense that
$$
R_X(t+s,\tau+s)=R_X(t,\tau),\ \ s\in\bR.
$$
All continuous translation-invariant reproducing kernels $K$ on $\bR\times\bR$ are characterized by the Bochner theorem \cite{Bochner2} as
$$
K(t,\tau)=\int_{\bR}e^{i(t-\tau)\xi}d\mu(\xi),\ \ t,\tau\in\bR,
$$
where $\mu$ is a finite positive Borel measure on $\bR$. Since we are dealing with bandlimited random processes for which $R_X\in \cB_\delta$, there holds
\begin{equation}\label{rxfourier}
R_X(t,\tau)=\int_{-\delta}^\delta e^{i(t-\tau)\xi}\rho(\xi)d\xi,\ \ t,\tau\in\bR
\end{equation}
for some nonnegative function $\rho\in L^2([-\delta,\delta])$. We assume throughout the section that $\|\rho\|_{L^2([-\delta,\delta])}>0$ to avoid the trivial case.

For a general continuous translation-invariant reproducing kernel $K$ on $\bR\times\bR$, the optimal method of reconstructing functions $f$ in the unit ball of $\cH_K$ from their samples $f(J_n)$ on $J_n$ have been understood in \cite{Charles A. Micchelli}. To introduce the result, we denote for a Hilbert space $\cH$ by $U(\cH):=\{f\in\cH:\|f\|_{\cH}\le 1\}$ its closed unit ball, and by $\bC^{J_n}$ the set of all the functions on $J_n$. The purpose is to reconstruct the function values of $f\in\cH_K$ on $[0,1]$ from its samples $f|_{J_n}$. The maximum norm is engaged to measure the reconstruction error. Thus, a candidate reconstruction operator $\cA$ should map $f|_{J_n}$ to a function in $L^\infty([0,1])$. The approximation error of $\cA$ is measured by
$$
\cE_{n}(\cA,K):=\sup\{\|f-\cA(f|_{J_n})\|_{L^\infty([0,1])}:f\in U(\cH_K)\}.
$$
It is well-known \cite{optimal,Rivlin} that the optimal reconstruction operator is by the minimal norm interpolation whose approximation error attains the following intrinsic approximation error
$$
\cE_{n}(K):=\inf\{\cE_n(\cA,K):\mbox{ among all mappings }\cA:\, \bC^{J_n}\to L^\infty([0,1])\}.
$$
The intrinsic approximation error is characterized in \cite{Charles A. Micchelli} as
\begin{equation}\label{intrinsic}
\cE_{n}(K)=\sup\{\|f\|_{L^\infty([0,1])}:f\in U(\cH_K),\ f(J_n)=0\}.
\end{equation}

We consider in this section the optimal deterministic linear method of reconstructing a bandlimited random process $X(t,\omega)$ on $t\in[0,1]$ from its samples $X(j,\omega)$, $j\in J_n$. Specifically, we wish to find coefficient functions $c_j$'s on $[0,1]$ that minimize the quantity
\begin{equation}\label{distance}
\|X(t,\cdot)-\sum_{j\in
J_n}c_j(t)X(j,\cdot)\|_{L^2(\Omega,dP(\omega))}
\end{equation}
for each $t\in (0,1)$. The associated intrinsic approximation error is measured by
$$
\sup\{\|X(t,\cdot)-\sum_{j\in
J_n}c_j(t)X(j,\cdot)\|_{L^2(\Omega,dP(\omega))}:t\in(0,1)\}.
$$
We first make a simple observation about the optimal coefficients and the above intrinsic error. Let $V$ be a normed vector space, $x_0\in V$ and $A\subseteq V$. We shall denote by $\span A$ the linear subspace spanned by $A$, and denote by
$$
\dist_V(x_0,\span A)
$$
the distance of $x_0\in V$ to $\span A$ in $V$.

\begin{proposition}\label{randomopt}
The optimal coefficient functions $\bc=(c_j:j\in J_n)$ that minimize (\ref{distance}) are given by
\begin{equation}\label{optc}
\bc(t)=(R_X(t,j):j\in J_n)(R_X[J_n])^{-1},\ \ t\in(0,1)
\end{equation}
and the intrinsic error has the form
\begin{equation}\label{intrinsicform}
\sup\{\|X(t,\cdot)-\sum_{j\in
J_n}c_j(t)X(j,\cdot)\|_{L^2(\Omega,dP(\omega))}:t\in(0,1)\}=\cE_{n}(R_X).
\end{equation}
\end{proposition}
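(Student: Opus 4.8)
The plan is to reduce the random-process minimization problem to the deterministic reproducing-kernel picture via the feature map $\Phi(t):=X(t,\cdot)\in L^2(\Omega,dP(\omega))$, which satisfies $\langle\Phi(t),\Phi(\tau)\rangle_{L^2(\Omega,dP)}=R_X(t,\tau)$. The first step is to recognize that for each fixed $t\in(0,1)$, minimizing $\|X(t,\cdot)-\sum_{j\in J_n}c_j(t)X(j,\cdot)\|_{L^2(\Omega,dP)}$ over $\bc(t)\in\bC^{J_n}$ is precisely the problem of computing $\dist_{L^2(\Omega,dP)}\bigl(\Phi(t),\span\{\Phi(j):j\in J_n\}\bigr)$, i.e. orthogonally projecting $\Phi(t)$ onto the finite-dimensional subspace spanned by the sample vectors. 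The minimizer is the orthogonal projection, whose coefficients solve the normal equations $R_X[J_n]\,\bc(t)^{\mathsf T}=(R_X(t,j):j\in J_n)^{\mathsf T}$; since we assumed $\|\rho\|_{L^2([-\delta,\delta])}>0$ and $\delta<\pi$, the Gram matrix $R_X[J_n]$ is strictly positive-definite (the functions $e^{ij\xi}$, $j\in J_n$, are linearly independent on $[-\delta,\delta]$ against the positive weight $\rho$), so it is invertible and formula (\ref{optc}) follows.

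The second step is to identify the resulting minimal distance with $\cE_n(R_X)$. Here I would use the characterization (\ref{intrinsic}): for the reproducing kernel $R_X$,
$$
\cE_n(R_X)=\sup\{\|g\|_{L^\infty([0,1])}:g\in U(\cH_{R_X}),\ g(J_n)=0\}.
$$
The link between the orthogonal-projection distance and this quantity is the standard duality in reproducing kernel Hilbert spaces: for any RKHS $\cH_K$ and any $t$,
$$
\dist_{\cH_K}\bigl(K(t,\cdot),\span\{K(j,\cdot):j\in J_n\}\bigr)
=\sup\{|g(t)|:g\in U(\cH_K),\ g(j)=0\ \forall j\in J_n\},
$$
because the left-hand side equals the norm of the residual $K(t,\cdot)-P_n K(t,\cdot)$, which by the reproducing property (\ref{reproducing}) is the norm of the functional $g\mapsto g(t)$ restricted to the subspace $\{g\in\cH_K:g(J_n)=0\}$. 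Since the feature-map construction gives an isometry between $\span\{K(t,\cdot):t\in\bR\}\subseteq\cH_{R_X}$ and $\span\{\Phi(t):t\in\bR\}\subseteq L^2(\Omega,dP)$ sending $R_X(t,\cdot)$ to $\Phi(t)$, the distance computed in $L^2(\Omega,dP)$ in (\ref{distance}) equals the distance computed in $\cH_{R_X}$ of $R_X(t,\cdot)$ to $\span\{R_X(j,\cdot):j\in J_n\}$. Taking the supremum over $t\in(0,1)$ and invoking (\ref{intrinsic}) (noting that the supremum over $(0,1)$ versus $[0,1]$ agrees by continuity, as functions in $\cB_\delta$ are continuous) yields (\ref{intrinsicform}).

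I expect the main obstacle to be making the RKHS duality identity above fully rigorous, in particular verifying that $\cH_{R_X}$ is the isometric image of the function space $\cB_\delta$ under a weighting by $\rho$ (so that its elements are genuinely continuous functions on $[0,1]$ and the sup-norm in (\ref{intrinsic}) makes sense), and confirming that the closure operations in the definition of $\cH_{R_X}$ do not enlarge the relevant subspace beyond $\span\{\Phi(j):j\in J_n\}$ — this last point is automatic since that subspace is finite-dimensional, hence closed. A secondary technical point is the passage between the open interval $(0,1)$ appearing in (\ref{distance}) and the closed interval $[0,1]$ in the definition of $\cE_n$; this is resolved by the continuity of $t\mapsto\|X(t,\cdot)-\sum_j c_j(t)X(j,\cdot)\|_{L^2(\Omega,dP)}$, which follows from the continuity of $R_X$ and of the optimal coefficients $c_j$. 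Everything else is a routine assembly of the normal-equations computation and the isometry.
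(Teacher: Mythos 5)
Your proposal is correct and follows essentially the same route as the paper's proof: orthogonal projection in $L^2(\Omega,dP(\omega))$ giving the normal equations, invertibility of $R_X[J_n]$ from the linear independence of the exponentials $e_j$ on the positive-measure support of $\rho$, and the standard RKHS duality identifying $\dist_{\cH_{R_X}}(R_X(t,\cdot),\span\{R_X(j,\cdot):j\in J_n\})$ with $\sup\{|f(t)|:f\in U(\cH_{R_X}),\ f(J_n)=0\}$ via the reproducing property and Cauchy--Schwarz. Your explicit attention to the $(0,1)$ versus $[0,1]$ issue is a small point of rigor the paper leaves implicit, but it does not change the argument.
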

\begin{proof}
The minimum of the quantity (\ref{distance}) is the distance from $X(t,\cdot)$ to the linear space spanned by $X(j,\cdot)$, $j\in J_n$ in the Hilbert space $L^2(\Omega,dP(\omega))$. By the characterization of best approximation in Hilbert spaces, the optimal coefficients $c_j$'s are hence determined by the orthogonality conditions
$$
\langle X(t,\cdot)-\sum_{j\in J_n}c_j(t)X(j,\cdot),X(k,\cdot)\rangle_{L^2(\Omega,dP(\omega))}=0,\ \ k\in J_n.
$$
By (\ref{RXfeature}), we reach the linear system of equations
$$
\bc R_X[J_n]=(R_X(t,j):j\in J_n).
$$
To get (\ref{optc}), it remains to point out that $R_X(J_n)$ is nonsingular. We confirm this by showing that it is strictly positive-definite. Assume there are coefficients $\alpha_j\in\bC$, $j\in J_n$ such that
$$
\sum_{j\in J_n}\sum_{k\in J_n}\alpha_j\bar{\alpha_k}R_X(j,k)=0.
$$
It follows by (\ref{rxfourier}) that
$$
\int_{-\delta}^\delta \biggl|\sum_{j\in
J_n}\alpha_je^{ij\xi}\biggr|^2\rho(\xi)d\xi=0.
$$
Thus, the holomorphic function
$$
\varphi(z):=\sum_{j\in J_n}\alpha_je^{ijz},\ \ z\in\bC
$$
must vanish on the support of $\rho$, which is of positive Lebesgue measure. As a consequence, $\varphi$ vanishes everywhere on $\bC$, forcing $\alpha_j=0$ for each $j\in J_n$.

By choosing the optimal coefficients (\ref{optc}), we compute for each $t\in (0,1)$
$$
\|X(t,\cdot)-\sum_{j\in
J_n}c_j(t)X(j,\cdot)\|_{L^2(\Omega,dP(\omega))}={\bigg(R_X(0)-(R_X(t,j):j\in
J_n)(R_X[J_n])^{-1}(R_X(j,t):j\in J_n)^T}\bigg)^{\frac{1}{2}}.
$$
One observes that this is also equal to the distance
$\dist_{\cH_{R_X}}(R_X(t,\cdot),\span\{R_X(j,\cdot):j\in J_n\})$. To
complete the proof, it remains to show that it equals
$$
\sup\{|f(t)|:f\in U(\cH_{R_X}),\ f(J_n)=0\}.
$$
Let $f\in U(\cH_{R_X})$ with $f(J_n)=0$, $j\in J_n$. By (\ref{reproducing}),
$$
\langle f, R_X(j,\cdot)\rangle_{\cH_{R_X}}=f(j)=0,\ \ j\in J_n.
$$
Therefore, for arbitrary coefficients $\alpha_j\in \bC$, $j\in J_n$, there holds
$$
f(t)=\langle f, R_X(t,\cdot)\rangle_{\cH_{R_X}}=\langle f,
R_X(t,\cdot)-\sum_{j\in J_n}\alpha_j
R_X(j,\cdot)\rangle_{\cH_{R_X}}.
$$
By the Cauchy-Schwartz inequality, we get
$$
|f(t)|\le \|f\|_{\cH_{R_X}}\biggl\|R_X(t,\cdot)-\sum_{j\in
J_n}\alpha_j R_X(j,\cdot)\biggr\|_{\cH_{R_X}}\le
\biggl\|R_X(t,\cdot)-\sum_{j\in J_n}\alpha_j
R_X(j,\cdot)\biggr\|_{\cH_{R_X}}.
$$
Since this is true for arbitrary coefficients $\alpha_j$'s, we have
$$
\sup\{|f(t)|:f\in U(\cH_{R_X}),\ f(J_n)=0\}\le
\dist_{\cH_{R_X}}(R_X(t,\cdot),\span\{R_X(j,\cdot):j\in J_n\}).
$$
On the other hand, letting
$$
f=\frac{R_X(t,\cdot)-\sum_{j\in J_n}c_j(t)
R_X(j,\cdot)}{\biggl\|R_X(t,\cdot)-\sum_{j\in J_n}c_j(t)
R_X(j,\cdot)\biggr\|_{\cH_{R_X}}}
$$
shows that the equality holds. The proof is complete.
\end{proof}

We remark that although the optimal linear reconstruction algorithm is given in the above proposition, it is numerically intractable as the condition number of the kernel matrix $R_X[J_n]$ typically increases to infinity at an astonishing rate as $n$ increases. The following table exhibits such a phenomenon for the reproducing kernel $K_\delta$ defined by (\ref{kdelta}).

\begin{center}{\bf Table 3.1 The condition number of $K_\delta[J_n]$.}
$$
\begin{array}{c|ccccc}\hline\hline
          & n=1         &n=3      &n=5       &n=7&n=9\\\hline
          \delta=\frac{3\pi}4&1.86&15.5&248&4.98\times10^3&1.08\times10^5\\
            \delta=\frac{\pi}2&4.50&2.19\times10^3&1.84\times 10^6 &1.75\times10^9 &1.76\times10^{12}\\
          \delta=\frac{\pi}3&10.6&2.51\times10^5&7.06\times10^{9}&2.23\times10^{14}&5.13\times10^{16}\\\hline\hline
\end{array}
$$
\end{center}

Our purpose for this section is to estimate the intrinsic error $\cE_n(R_X)$ in order to give guidelines in constructing deterministic linear reconstruction methods. Two practical reconstruction algorithms with exponentially decaying approximation error will be given in the next section.

To estimate $\cE_n(R_X)$, we shall need two lemmas. The first one presents the lower and upper bound estimates for $\cE_n(K_\delta)$ established in \cite{Charles A. Micchelli}.

\begin{lemma}\label{estimatessinc} \cite{Charles A. Micchelli}
It holds for all $\delta\le\pi$ and $n\in\bN$ that
$$
\frac5{11e\sqrt{3\pi}} \frac{\delta}{2n+1}
\left(\frac\delta 4\right)^{2n}\le \cE_n(K_\delta)\le \sqrt{2+\pi-\delta}\frac{\sqrt{3}e}{\sqrt{2}\pi}\frac1{\sqrt{n}}\exp\left(-\frac{\pi-\delta}2n\right).
$$
\end{lemma}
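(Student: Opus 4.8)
The plan is to read the lemma through the characterization (\ref{intrinsic}), which for $K=K_\delta$, $\cH_{K_\delta}=\cB_\delta$ becomes
$$
\cE_n(K_\delta)=\sup\{\|f\|_{L^\infty([0,1])}:f\in\cB_\delta,\ \|f\|_{L^2(\bR)}\le1,\ f|_{J_n}=0\}.
$$
Thus the upper bound is obtained by exhibiting one good deterministic reconstruction that every such $f$ must satisfy, and the lower bound by exhibiting one good admissible $f$.

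For the upper bound I would use the modified Shannon (quasi-interpolation) operator $\cA(f|_{J_n})(t):=\sum_{j\in J_n}f(j)\,\sinc(t-j)\phi(t-j)$, with $\phi$ chosen so that $\widehat\phi$ is supported in $[-(\pi-\delta),\pi-\delta]$ and $\phi(0)=1$. A short Fourier-support computation (using $\widehat{\sinc}=(2\pi)^{-1/2}\mathbf{1}_{[-\pi,\pi]}$ and $\int\widehat\phi=\sqrt{2\pi}\,\phi(0)$) shows $\psi:=\sinc\cdot\phi$ satisfies $\widehat\psi\equiv(2\pi)^{-1/2}$ on $[-\delta,\delta]$; since $\delta<\pi$ there is no aliasing and the quasi-interpolation formula $f=\sum_{j\in\bZ}f(j)\psi(\cdot-j)$ holds for every $f\in\cB_\delta$, so for admissible $f$ one gets $f-\cA(f|_{J_n})=\sum_{j\notin J_n}f(j)\,\sinc(\cdot-j)\phi(\cdot-j)$. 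Cauchy--Schwarz together with (\ref{following}), which gives $\sum_{j\notin J_n}|f(j)|^2=\|f\|_{L^2(\bR)}^2\le1$, then yields
$$
\cE_n(K_\delta)\le\cE_n(\cA,K_\delta)\le\sup_{t\in[0,1]}\Big(\sum_{j\notin J_n}|\sinc(t-j)\phi(t-j)|^2\Big)^{1/2}.
$$
For $t\in[0,1]$ and $j\notin J_n$ one has $|t-j|\ge n$, so $|\sinc(t-j)|\le(\pi|t-j|)^{-1}$ and the remaining task is to bound $\sup_{t\in[0,1]}\sum_{|t-j|\ge n}|\phi(t-j)|^2(t-j)^{-2}$. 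The choice of $\phi$ is decisive: a crude choice such as $\phi=\sinc^m(\tfrac{\pi-\delta}{\pi m}\,\cdot)$ with $m$ optimized (Jagerman's weight) only makes this quantity $O(\tfrac1{n^2}\exp(-\tfrac{2(\pi-\delta)}{e}n))$, hence the weaker exponent $\tfrac{\pi-\delta}{e}$; to reach the sharp exponent $\tfrac{\pi-\delta}{2}$ one must take $\phi$ to be the optimal spline weight of \cite{Haizhang Zhang}, for which a careful estimate bounds the tail sum by $O(\tfrac1n\exp(-(\pi-\delta)n))$, and tracking the constants produces exactly the stated bound. I expect this optimal-weight analysis to be the main obstacle; the rest is Cauchy--Schwarz and bookkeeping.

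For the lower bound it is enough to produce, for each $n$, a single admissible $f_n$ whose sup over $[0,1]$ is not too small. A natural candidate is
$$
f_n(t):=\frac{\sin\big(b(t-\tfrac12)\big)}{b(t-\tfrac12)}\ \prod_{j\in J_n}\sin\big(b(t-j)\big),\qquad b:=\frac{\delta}{2n+1}.
$$
It is entire of exponential type $(2n+1)b=\delta$; it lies in $L^2(\bR)$ since the product of sines is bounded by $1$ while the leading factor decays like $|t|^{-1}$, and the envelope bound $|f_n(t)|\le\min\{1,(b|t-\tfrac12|)^{-1}\}$ gives $\|f_n\|_{L^2(\bR)}^2\le 4/b=4(2n+1)/\delta$; and because $\sin(b(t-j))$ vanishes at $t=j$, $f_n|_{J_n}=0$. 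At $t=\tfrac12$ every argument $b(\tfrac12-j)$, $j\in J_n$, lies in $(-\tfrac\pi2,\tfrac\pi2)$ because $b(n-\tfrac12)<\tfrac\delta2<\tfrac\pi2$, so $|\sin x|\ge\tfrac2\pi|x|$ applies to each factor and
$$
|f_n(\tfrac12)|=\prod_{j\in J_n}\big|\sin(b(\tfrac12-j))\big|\ \ge\ \Big(\frac{2b}{\pi}\Big)^{2n}\prod_{j\in J_n}\big|\tfrac12-j\big|=\Big(\frac{2b}{\pi}\Big)^{2n}\Big(\frac{(2n)!}{4^n n!}\Big)^2,
$$
and Stirling's formula turns $\frac{(2n)!}{4^n n!}$ into $(n/e)^n$ up to a constant. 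Dividing, $\cE_n(K_\delta)\ge|f_n(\tfrac12)|/\|f_n\|_{L^2(\bR)}\gtrsim(2n+1)^{-1/2}\big(\tfrac{\delta}{\pi e}\big)^{2n}$, which is already exponential in $n$ and so rules out any super-exponential rate; a more careful choice of the auxiliary factor (and of the evaluation point) sharpens the base from $\delta/(\pi e)$ to $\delta/4$ and gives the stated constants. The only genuine subtlety here is ensuring $f_n\in L^2(\bR)$, which is precisely what forces the extra decaying factor and costs a sliver $b$ of the available bandwidth.

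Finally, both halves may be repackaged through the Gram-determinant identity $\cE_n(K_\delta)^2=\sup_{t_0\in[0,1]}\det K_\delta[\{t_0\}\cup J_n]/\det K_\delta[J_n]$, which follows from the distance formula in the proof of Proposition \ref{randomopt} together with $\langle e^{-ij\xi},e^{-ik\xi}\rangle_{L^2([-\delta,\delta])}=2\pi K_\delta(j,k)$: the matrix $K_\delta[J_n]$ is Toeplitz with symbol $\mathbf{1}_{[-\delta,\delta]}$, and Szeg\H{o}/Widom-type asymptotics for such determinants again yield exponential decay. This is cleaner to state but pushes the real work into the determinant asymptotics; for a self-contained argument I would prefer the two constructions above.
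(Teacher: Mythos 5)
The paper itself offers no proof of this lemma --- it is imported verbatim from \cite{Charles A. Micchelli} --- so the only internal yardstick is the machinery the paper rebuilds in Section 2 for the general kernel $R_X$ (Lemmas \ref{upperbound2lemma1}--\ref{upperbound2lemma3} and Theorem \ref{upperbound2}). Your upper-bound argument is exactly that machinery specialized to $K_\delta$: quasi-interpolate with $\sinc(\cdot-j)\phi(\cdot-j)$ where the weight is flat at the origin and bandlimited to $[-(\pi-\delta),\pi-\delta]$ on the other side of the Fourier transform, reduce to the tail $\sum_{j\notin J_n}$ via Cauchy--Schwarz against $\sum_{j\in\bZ}|f(j)|^2=\|f\|_{L^2(\bR)}^2$, control the weight's decay by $\|\phi^{(k)}\|_{L^1}|t|^{-k}$, minimize that norm (the spline of \cite{Haizhang Zhang}, i.e.\ the quantity in (\ref{upperbound2lemma3eq})), and take $k\approx n(\pi-\delta)/2$. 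This is the right route and, with the constants tracked as in the proof of Theorem \ref{upperbound2}, does deliver a bound of the stated form. Two small blemishes: the displayed ``$\sum_{j\notin J_n}|f(j)|^2=\|f\|_{L^2(\bR)}^2$'' should be ``$\le$'', and your weight exists only for $\delta<\pi$ while the lemma is stated for $\delta\le\pi$ (the endpoint needs the classical $O(n^{-1/2})$ truncation estimate separately).

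The genuine gap is in the lower bound. Your $f_n$ is admissible and every step of the computation is sound, but applying $|\sin x|\ge\frac2\pi|x|$ to all $2n$ factors costs $(2/\pi)^{2n}$, and after Stirling you arrive at a base of roughly $\delta/(\pi e)\approx\delta/8.54$ rather than $\delta/4$. Since $\pi e>4$, what you prove is smaller than what the lemma asserts by the exponentially large factor $(\pi e/4)^{2n}$, so the stated inequality is not established; the sentence about ``a more careful choice of the auxiliary factor and of the evaluation point'' is precisely where the constant $(\delta/4)^{2n}$ is earned and cannot be left as a remark. What you do obtain --- an exponential lower bound with \emph{some} base --- is enough for the qualitative message the paper extracts from this half of the lemma (no super-exponential decay in Theorem \ref{lowerbound}), but the quantitative statement of Theorem \ref{lowerbound} inherits the constant $(\delta'/4)^{2n}$ from this lemma and would have to be weakened accordingly. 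The Gram-determinant identity in your closing paragraph is correct (it is the distance formula from the proof of Proposition \ref{randomopt}), but as you concede it merely relocates the difficulty into Toeplitz-determinant asymptotics for the augmented, no-longer-Toeplitz matrix.
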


The next lemma is about the inclusion relation of the reproducing kernel Hilbert spaces of translation-invariant reproducing kernels.
\begin{lemma}\label{inclusion} \cite{ZhangZhao}
Let $u,v$ be nonnegative functions in $L^1(\bR)$ and let $K,G$ be defined by
$$
K(t,\tau)=\int_{\bR}e^{i(t-\tau)\xi}u(\xi)d\xi,\ \ G(t,\tau)=\int_{\bR}e^{i(t-\tau)\xi}v(\xi)d\xi,\ \ t,\tau\in\bR.
$$
Then $\cH_K\subseteq\cH_G$ if and only if the set $\{t\in\bR:u(t)>0,\ v(t)=0\}$ has Lebesgue measure zero and the essential bound $\lambda$ of
$u/v$ on $\{t\in\bR:v(t)>0\}$ is finite, in which case there holds
$$
\|f\|_{\cH_G}\le \sqrt{\lambda}\|f\|_{\cH_K}.
$$
\end{lemma}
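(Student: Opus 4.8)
The plan is to combine the feature‑map description of a translation‑invariant reproducing kernel with the Fourier transform so as to obtain a concrete ``spectral'' model of $\cH_K$, after which both implications of the lemma become elementary comparisons of weighted $L^2$‑integrals. First the model. For $K(t,\tau)=\int_\bR e^{i(t-\tau)\xi}u(\xi)\,d\xi$ the assignment $\Phi_u(t):=e^{it\cdot}$ is a feature map into $\cW_u:=L^2(\bR,u(\xi)\,d\xi)$ (compare the discussion of $R_X$ above), and the only $g\in\cW_u$ with $\langle g,\Phi_u(t)\rangle=0$ for all $t$ is $g=0$, since such $g$ vanishes $u\,d\xi$‑a.e. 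Hence the synthesis operator $g\mapsto\bigl(t\mapsto\int_\bR g(\xi)e^{-it\xi}u(\xi)\,d\xi\bigr)$ is an isometry of $\cW_u$ onto $\cH_K$, which translates into: $f\in\cH_K$ iff $f(t)=\int_\bR\sigma(\xi)e^{-it\xi}\,d\xi$ for some $\sigma\in L^1(\bR)$ vanishing a.e. on $\{u=0\}$ with $\int_{\{u>0\}}|\sigma(\xi)|^2/u(\xi)\,d\xi<\infty$; the symbol $\sigma$ is then unique (Fourier inversion on $L^1$) and $\|f\|_{\cH_K}^2=\int_{\{u>0\}}|\sigma|^2/u$. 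The same description, with the same symbol $\sigma$, holds for $\cH_G$ with $v$ in place of $u$. (For a reference I would cite \cite{CuckerSmale,ZhangZhao}; otherwise the isometry is short to verify, boundedness of point evaluations following from $|f(t)|\le\|g\|_{\cW_u}(\int u)^{1/2}$ by Cauchy--Schwarz.)

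Granting this model, the ``if'' direction is immediate. Assume $\bigl|\{u>0,\ v=0\}\bigr|=0$ and $\lambda:=\operatorname{ess\,sup}_{\{v>0\}}u/v<\infty$, and let $f\in\cH_K$ with symbol $\sigma$. Since $\sigma=0$ a.e. on $\{u=0\}$ and $\{v=0\}=\bigl(\{v=0\}\cap\{u=0\}\bigr)\cup\bigl(\{v=0\}\cap\{u>0\}\bigr)$ with the second set null, $\sigma=0$ a.e. on $\{v=0\}$. On $\{v>0\}$ we have $\sigma=0$ a.e. on the subset $\{u=0\}$ while $1/v\le\lambda/u$ a.e. on $\{u>0\}$, so
$$
\int_{\{v>0\}}\frac{|\sigma|^2}{v}=\int_{\{v>0\}\cap\{u>0\}}\frac{|\sigma|^2}{v}\le\lambda\int_{\{u>0\}}\frac{|\sigma|^2}{u}=\lambda\,\|f\|_{\cH_K}^2<\infty .
$$
Thus $f\in\cH_G$ and $\|f\|_{\cH_G}\le\sqrt\lambda\,\|f\|_{\cH_K}$.

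For the ``only if'' direction I would first note that the inclusion $\cH_K\hookrightarrow\cH_G$ is automatically bounded: norm convergence in either space forces pointwise convergence, so the inclusion has closed graph and the closed graph theorem applies; write $\lambda$ for the square of its norm. To extract information about $u,v$ I would feed in symbols $\sigma=\mathbf 1_E$ for measurable $E\subseteq\{u>0\}$ with $0<|E|<\infty$ and $\int_E 1/u<\infty$; such $E$ exist and exhaust $\{u>0\}$ up to a null set (take $E\subseteq\{u>1/m\}\cap[-M,M]$). The corresponding $f_E$ lies in $\cH_K$ with $\|f_E\|_{\cH_K}^2=\int_E 1/u$. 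If $\{u>0,\ v=0\}$ had positive measure, an $E$ chosen inside it would give $f_E\in\cH_K$ whose symbol does not vanish on $\{v=0\}$, so $f_E\notin\cH_G$, contradicting $\cH_K\subseteq\cH_G$; hence that set is null. If in addition $\operatorname{ess\,sup}_{\{v>0\}}u/v>\lambda$, pick $\lambda'>\lambda$ with $A:=\{v>0,\ u/v>\lambda'\}$ of positive measure, choose $E\subseteq A$ as above (note $A\subseteq\{u>0\}$), and use $f_E\in\cH_K\subseteq\cH_G$: on one hand $\|f_E\|_{\cH_G}^2=\int_E 1/v\le\lambda\int_E 1/u$, on the other $1/v\ge\lambda'/u$ on $E$ gives $\int_E 1/v\ge\lambda'\int_E 1/u>\lambda\int_E 1/u$, a contradiction. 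Hence $\operatorname{ess\,sup}_{\{v>0\}}u/v\le\lambda<\infty$, which finishes the proof.

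The main obstacle is the first step: pinning down the spectral model of $\cH_K$ — the synthesis isometry, the uniqueness of the representing symbol $\sigma$, and the fact that the \emph{same} $\sigma$ represents $f$ whether $f$ is viewed in $\cH_K$ or in $\cH_G$. Everything after that is the routine weighted‑$L^2$ bookkeeping above; the only care needed in the converse is the exhaustion device producing test sets $E$ that simultaneously lie inside the prescribed ``bad'' set, have finite measure, and make $1/u$ integrable over them.
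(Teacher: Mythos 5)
The paper does not prove Lemma~\ref{inclusion} at all --- it is imported verbatim from the reference [ZhangZhao] --- so there is no in-paper argument to compare against; your proposal has to stand on its own, and it does. The spectral model you build in the first step (every $f\in\cH_K$ is $f(t)=\int_{\bR}\sigma(\xi)e^{-it\xi}d\xi$ for a unique $\sigma\in L^1(\bR)$ vanishing a.e.\ on $\{u=0\}$, with $\|f\|_{\cH_K}^2=\int_{\{u>0\}}|\sigma|^2/u$) is exactly the structure theorem the paper itself records later as Lemma~\ref{translationstructure}, so you could simply invoke that statement rather than re-derive the synthesis isometry; the one ingredient it does not make explicit, and which your argument genuinely needs, is the uniqueness of the symbol $\sigma$ (injectivity of the Fourier transform on $L^1$), which you correctly identify and use to see that the \emph{same} $\sigma$ represents $f$ in both $\cH_K$ and $\cH_G$. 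Given that model, your ``if'' direction is the right one-line weighted-$L^2$ comparison, and your ``only if'' direction (closed graph theorem for boundedness of the inclusion, then indicator symbols $\mathbf{1}_E$ supported on exhausting sets $E\subseteq\{u>1/m\}\cap[-M,M]$ to test both the null-set condition and the essential bound) is sound; the only cosmetic issue is that you temporarily use $\lambda$ for $\|\iota\|^2$ while the lemma reserves $\lambda$ for $\mathop{\rm ess\,sup}_{\{v>0\}}u/v$, though the two directions together show these coincide, so the stated constant $\sqrt{\lambda}$ is recovered. I find no gap.
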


We are ready to present upper bound estimates for $\cE_n(R_X)$.

\begin{theorem}\label{upperbound}
Let $\delta\le\pi$ and suppose that the function $\rho$ in (\ref{rxfourier}) belongs to $L^\infty([-\delta,\delta])$. Then it holds for all $n\in\bN$ that
\begin{equation}\label{upperrx}
\cE_n(R_X)\le \|\sqrt{\rho}\|_{L^\infty([-\delta,\delta])}\sqrt{2+\pi-\delta}\frac{\sqrt{3}e}{\sqrt{\pi n}}\exp\left(-\frac{\pi-\delta}2n\right).
\end{equation}
\end{theorem}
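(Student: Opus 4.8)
The plan is to combine the two lemmas with the characterization (\ref{intrinsic}) of the intrinsic error. The key observation is that $R_X$ and $K_\delta$ are both continuous translation-invariant reproducing kernels on $\bR\times\bR$ whose spectral densities are supported on $[-\delta,\delta]$: by (\ref{rxfourier}), $R_X$ has spectral density $\rho\cdot\chi_{[-\delta,\delta]}$, while by (\ref{kdelta}) the kernel $K_\delta$ has spectral density $\frac1{2\pi}\chi_{[-\delta,\delta]}$ (up to the normalization implicit in (\ref{kdelta})). Hence I would apply Lemma \ref{inclusion} with $u=\rho\chi_{[-\delta,\delta]}$ and $v=\frac1{2\pi}\chi_{[-\delta,\delta]}$. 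The set $\{u>0,\ v=0\}$ is contained in $\bR\setminus[-\delta,\delta]$ up to the null set where $\rho$ vanishes, so it has measure zero; and the essential bound of $u/v$ on $\{v>0\}=[-\delta,\delta]$ is $2\pi\|\rho\|_{L^\infty([-\delta,\delta])}$. Lemma \ref{inclusion} then gives $\cH_{R_X}\subseteq\cH_{K_\delta}$ together with the norm inequality $\|f\|_{\cH_{K_\delta}}\le\sqrt{2\pi}\,\|\sqrt\rho\|_{L^\infty([-\delta,\delta])}\|f\|_{\cH_{R_X}}$ (I must be careful to track the factor $2\pi$ so that it eventually cancels the $\frac{\sqrt3 e}{\sqrt2\pi}$ in Lemma \ref{estimatessinc}, leaving the $\frac{\sqrt3 e}{\sqrt{\pi n}}$ claimed in (\ref{upperrx})).

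Next I would push this inclusion through formula (\ref{intrinsic}). Write $\lambda:=2\pi\|\rho\|_{L^\infty([-\delta,\delta])}$. If $f\in U(\cH_{R_X})$ with $f(J_n)=0$, then $f\in\cH_{K_\delta}$ with $\|f\|_{\cH_{K_\delta}}\le\sqrt\lambda$, so $g:=f/\sqrt\lambda$ lies in $U(\cH_{K_\delta})$ and still satisfies $g(J_n)=0$. Applying (\ref{intrinsic}) for $K_\delta$ gives $\|g\|_{L^\infty([0,1])}\le\cE_n(K_\delta)$, hence $\|f\|_{L^\infty([0,1])}\le\sqrt\lambda\,\cE_n(K_\delta)$. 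Taking the supremum over all such $f$ and using (\ref{intrinsic}) again, now for $R_X$, yields
\[
\cE_n(R_X)\le\sqrt\lambda\,\cE_n(K_\delta)=\sqrt{2\pi}\,\|\sqrt\rho\|_{L^\infty([-\delta,\delta])}\,\cE_n(K_\delta).
\]
Finally I would substitute the upper bound from Lemma \ref{estimatessinc}, namely $\cE_n(K_\delta)\le\sqrt{2+\pi-\delta}\,\frac{\sqrt3 e}{\sqrt2\pi}\frac1{\sqrt n}\exp(-\frac{\pi-\delta}2 n)$, so that $\sqrt{2\pi}\cdot\frac{\sqrt3 e}{\sqrt2\pi}=\frac{\sqrt3 e}{\sqrt\pi}$, producing exactly (\ref{upperrx}).

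I do not anticipate a serious obstacle; the whole argument is a clean bookkeeping of constants once the right inputs are identified. The one point requiring care is the normalization of the spectral measure: the kernel $K_\delta(x,y)=\frac{\sin\delta(x-y)}{\pi(x-y)}$ equals $\frac1{2\pi}\int_{-\delta}^\delta e^{i(x-y)\xi}d\xi$, so matching it to the form $\int_\bR e^{i(t-\tau)\xi}v(\xi)d\xi$ in Lemma \ref{inclusion} forces $v=\frac1{2\pi}\chi_{[-\delta,\delta]}$, and it is this $\frac1{2\pi}$ that, under the ratio $u/v$, supplies the factor $2\pi$ inside the square root. A secondary minor point is confirming that the hypothesis $\|\rho\|_{L^2([-\delta,\delta])}>0$ from the section preamble, together with $\rho\in L^\infty$, makes $R_X$ a genuine (nontrivial) continuous translation-invariant reproducing kernel so that $\cH_{R_X}$ and formula (\ref{intrinsic}) are available; this is immediate. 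Everything else is substitution.
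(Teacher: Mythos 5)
Your argument is correct and coincides with the paper's own proof: both apply Lemma \ref{inclusion} with the spectral densities $\rho\chi_{[-\delta,\delta]}$ and $\frac{1}{2\pi}\chi_{[-\delta,\delta]}$ to obtain $\cH_{R_X}\subseteq\cH_{K_\delta}$ with the norm constant $\sqrt{2\pi}\|\sqrt{\rho}\|_{L^\infty([-\delta,\delta])}$, push this through the characterization (\ref{intrinsic}) to get $\cE_n(R_X)\le\sqrt{2\pi}\|\sqrt{\rho}\|_{L^\infty([-\delta,\delta])}\cE_n(K_\delta)$, and then invoke the upper bound of Lemma \ref{estimatessinc}. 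Your bookkeeping of the $2\pi$ normalization is exactly the point the paper relies on, and the constants match (\ref{upperrx}).
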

\begin{proof}
Note that $K_\delta$ has the form
\begin{equation}\label{kdeltaform2}
K_\delta(t,\tau)=\frac1{2\pi}\int_{-\delta}^\delta e^{i(t-\tau)\xi}d\xi,\ \ t,\tau\in\bR.
\end{equation}
By equation (\ref{rxfourier}) and the assumption that $\rho\in L^\infty([-\delta,\delta])$, we get by Lemma \ref{inclusion} that $\cH_{R_X}\subseteq\cH_{K_\delta}$ and
$$
\|f\|_{\cH_{K_\delta}}\le \sqrt{2\pi}\|\sqrt{\rho}\|_{L^\infty([-\delta,\delta])}\|f\|_{\cH_{R_X}},\ \ f\in\cH_{R_X}.
$$
It follows that
$$
U(\cH_{R_X})\subseteq \sqrt{2\pi}\|\sqrt{\rho}\|_{L^\infty([-\delta,\delta])}U(\cH_{K_\delta}).
$$
This inclusion relation together with (\ref{intrinsic}) implies
$$
\cE_n(R_X)\le \sqrt{2\pi}\|\sqrt{\rho}\|_{L^\infty([-\delta,\delta])}\cE_n(K_\delta).
$$
Applying the upper bound estimate for $\cE_n(K_\delta)$ in Lemma \ref{estimatessinc} yields (\ref{upperrx}).
\end{proof}

We shall use a different estimate method for the general case when the function $\rho$ in (\ref{rxfourier}) only lies in $L^1([-\delta,\delta])$. For notational simplicity, we shall denote for each $t\in\bR$ by $e_t$ the complex exponential function
$$
e_t(\xi):=e^{it\xi},\ \ \xi\in \bR.
$$

\begin{lemma}\label{upperbound2lemma1}
Let $\rho\in L^1([-\delta,\delta])$ and $R_X$ be of the form (\ref{rxfourier}). It holds that
\begin{equation}\label{upperbound2lemma1eq}
\cE_n(R_X)\le \|\rho\|_{L^1([-\delta,\delta])}^{1/2}\sup\{\dist_{L^\infty([-\delta,\delta])}(e_t,\span\{e_j:j\in J_n\}):t\in(0,1)\}.
\end{equation}
\end{lemma}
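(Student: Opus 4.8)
The plan is to use the characterization \eqref{intrinsic} of $\cE_n(R_X)$ together with the concrete feature map $\Phi(\xi)\mapsto$ (multiplication by $\sqrt{\rho}$) that realizes $R_X$ via \eqref{rxfourier}. The starting point is to recognize that $\cH_{R_X}$ can be modeled explicitly: every $f\in\cH_{R_X}$ is of the form $f(t)=\int_{-\delta}^{\delta}g(\xi)e^{it\xi}\rho(\xi)\,d\xi$ for a suitable $g\in L^2([-\delta,\delta],\rho\,d\xi)$, with $\|f\|_{\cH_{R_X}}=\|g\|_{L^2(\rho\,d\xi)}$. In particular, $R_X(t,\cdot)$ corresponds to $g=e_t$, consistent with the reproducing property. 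So the condition $f(J_n)=0$ says $\int_{-\delta}^\delta g(\xi)e^{ij\xi}\rho(\xi)\,d\xi=0$ for all $j\in J_n$, i.e. $g\,\rho$ (as an $L^1$ function, or a measure) annihilates $\span\{e_{-j}:j\in J_n\}=\span\{e_j:j\in J_n\}$.

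Next I would fix $t\in(0,1)$ and estimate $|f(t)|$ for such an $f$. Since $g\rho$ annihilates $\span\{e_j:j\in J_n\}$, for any $p\in\span\{e_j:j\in J_n\}$ we may write
\begin{equation}\label{proposalstep}
f(t)=\int_{-\delta}^{\delta}g(\xi)(e_t(\xi)-p(\xi))\rho(\xi)\,d\xi.
\end{equation}
Now bound this by Hölder: $|f(t)|\le \|g\sqrt{\rho}\|_{L^2}\,\|(e_t-p)\sqrt{\rho}\|_{L^2}\le \|g\|_{L^2(\rho\,d\xi)}\,\|e_t-p\|_{L^\infty([-\delta,\delta])}\,\|\rho\|_{L^1([-\delta,\delta])}^{1/2}$, where I have pulled the $L^\infty$ norm of $e_t-p$ out and used $\|\sqrt\rho\cdot h\|_{L^2}\le \|h\|_{L^\infty}\|\rho\|_{L^1}^{1/2}$. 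Since $\|g\|_{L^2(\rho\,d\xi)}=\|f\|_{\cH_{R_X}}\le 1$ for $f\in U(\cH_{R_X})$, and since $p\in\span\{e_j:j\in J_n\}$ is arbitrary, taking the infimum over $p$ gives $|f(t)|\le \|\rho\|_{L^1}^{1/2}\,\dist_{L^\infty([-\delta,\delta])}(e_t,\span\{e_j:j\in J_n\})$. Finally I take the supremum over $t\in(0,1)$ on the right and over all admissible $f$ on the left, and invoke \eqref{intrinsic} to conclude \eqref{upperbound2lemma1eq}.

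The only genuinely delicate point is justifying the representation of elements of $\cH_{R_X}$ and the identity $\|f\|_{\cH_{R_X}}=\|g\|_{L^2(\rho\,d\xi)}$ cleanly when $\rho$ is merely $L^1$ and may vanish on a set of positive measure; one has to be careful that $g$ is only determined on $\{\rho>0\}$ and that the feature-map/quotient construction of $\cH_{R_X}$ is the right one. This can be handled either by the standard feature-space description of a reproducing kernel recalled in the excerpt (with $\cW=L^2(\Omega,dP)$ and $\Phi(t)=X(t,\cdot)$, noting $X(t,\cdot)$ is the $L^2(\Omega)$-isometric image of $e_t\sqrt{\rho}\in L^2([-\delta,\delta])$) or by citing the general theory; once that model is in place, \eqref{proposalstep} and the Hölder step are routine. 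I expect no further obstacle.
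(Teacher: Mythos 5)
Your proof is correct and follows essentially the same route as the paper's: the paper starts from $\cE_n(R_X)=\sup_t\dist_{\cH_{R_X}}(R_X(t,\cdot),\span\{R_X(j,\cdot):j\in J_n\})$ (the formulation dual to \eqref{intrinsic}, equated with it in Proposition \ref{randomopt}), rewrites this via the feature map as $\dist_{L^2([-\delta,\delta])}(\sqrt{\rho}\,e_t,\span\{\sqrt{\rho}\,e_j:j\in J_n\})$, and then applies exactly your H\"older step $\|(e_t-p)\sqrt{\rho}\|_{L^2}\le\|e_t-p\|_{L^\infty}\|\rho\|_{L^1([-\delta,\delta])}^{1/2}$. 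One harmless slip: since $J_n$ is not symmetric, $\span\{e_{-j}:j\in J_n\}\ne\span\{e_j:j\in J_n\}$, but your key identity $f(t)=\int_{-\delta}^{\delta}g\,(e_t-p)\,\rho\,d\xi$ only requires $\int_{-\delta}^{\delta}g\,e_j\,\rho\,d\xi=0$ for $j\in J_n$, which is exactly what $f(J_n)=0$ gives.
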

\begin{proof}
By the proof Proposition \ref{randomopt},
$$
\cE_n(R_X)=\sup\biggl\{\dist_{\cH_{R_X}}(R_X(t,\cdot),\span\{R_X(j,\cdot):j\in J_n\}):\  t\in(0,1)\biggr\}.
$$
Direct computation also yields
$$
\dist_{\cH_{R_X}}(R_X(t,\cdot),\span\{R_X(j,\cdot):j\in J_n\})=\dist_{L^2([-\delta,\delta])}(\sqrt{\rho}e_t,\span\{\sqrt{\rho}e_j:j\in J_n\}),\ \ t\in(0,1).
$$
Noting that
$$
\dist_{L^2([-\delta,\delta])}(\sqrt{\rho}e_t,\span\{\sqrt{\rho}e_j:j\in J_n\})\le \|\rho\|_{L^1([-\delta,\delta])}^{1/2}\dist_{L^\infty([-\delta,\delta])}(e_t,\span\{e_j:j\in J_n\}),\ \ t\in(0,1)
$$
completes the proof.
\end{proof}

By Lemma \ref{upperbound2lemma1}, we shall estimate the distance of $e_t$, $t\in(0,1)$ to $\span\{e_j:j\in J_n\}$ in $L^\infty([-\delta,\delta])$. This will be done by truncating an exact series. Set for each $k\in\bN$ and $a\in\bR_+$ by $C^k_0([-a,a])$ the space of functions on $\bR$ with $k$ continuous derivatives that are supported on $[-a,a]$.

\begin{lemma}\label{upperbound2lemma2}
Set $k\in\bN$ and let $\phi\in C^k_0([-\pi+\delta,\pi-\delta])$ with $\hat{\phi}(0)=1$. Then it holds for each $t\in(0,1)$ that
\begin{equation}\label{upperbound2lemma2eq}
e_t(\xi)=\sum_{j\in\bZ}e_j(\xi)\sinc(t-j)\overline{\hat{\phi}(t-j)},\ \ \xi\in[-\delta,\delta],
\end{equation}
where the series converges absolutely for $\xi\in[-\delta,\delta]$.
\end{lemma}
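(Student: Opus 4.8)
The plan is to read (\ref{upperbound2lemma2eq}), for a fixed $t\in(0,1)$, as an identity between $2\pi$-periodic functions of $\xi$: the right-hand side is a Fourier series whose $j$-th coefficient is $\sinc(t-j)\overline{\hat\phi(t-j)}$, and I must show that the periodic function it represents coincides with $e^{it\xi}$ on $[-\delta,\delta]$. Writing $h(r):=\sinc(r)\overline{\hat\phi(r)}$, this coefficient is $h(t-j)$, so the claimed identity is exactly the Poisson summation formula for the function $s\mapsto h(t-s)$; I would prove it by a direct computation of Fourier coefficients so that no external regularity hypothesis has to be quoted.

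First I would record the decay of the coefficients. Since $\phi\in C^k_0([-\pi+\delta,\pi-\delta])$, repeated integration by parts gives $(ir)^k\hat\phi(r)=\widehat{\phi^{(k)}}(r)$, whence $|\hat\phi(r)|\le C_\phi(1+|r|)^{-k}$; combined with $|\sinc(r)|\le\min\{1,(\pi|r|)^{-1}\}$, the $j$-th summand of (\ref{upperbound2lemma2eq}) is dominated, uniformly in $\xi$, by a constant times $(1+|j|)^{-k-1}$. As $k\ge1$ the series converges absolutely and uniformly on $\bR$, so it represents a continuous $2\pi$-periodic function $\Psi_t$ whose $j$-th Fourier coefficient is $h(t-j)$; moreover $h\in C(\bR)\cap L^1(\bR)$, and its Fourier transform will be seen to lie in $L^1(\bR)$ as well.

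Next I would compute $\hat h$. Using the product rule $\widehat{fg}=\frac1{\sqrt{2\pi}}\hat f*\hat g$, the facts $\widehat{\sinc}=\frac1{\sqrt{2\pi}}\chi_{(-\pi,\pi)}$ and $\widehat{\overline{\hat\phi}}=\overline{\phi}$ (Fourier inversion together with conjugation), one finds $\hat h(\xi)=\frac1{2\pi}\int_{\xi-\pi}^{\xi+\pi}\overline{\phi(u)}\,du$. Hence $\hat h$ is continuous, is supported in $[-2\pi+\delta,2\pi-\delta]$ (the sum of the supports of $\chi_{(-\pi,\pi)}$ and $\overline\phi$), and, because $\supp\phi\subseteq[-\pi+\delta,\pi-\delta]$, whenever $|\xi|\le\delta$ the interval $[\xi-\pi,\xi+\pi]$ covers $\supp\phi$, so $\hat h(\xi)=\frac1{2\pi}\int_{\bR}\overline{\phi(u)}\,du=\overline{\hat\phi(0)}/\sqrt{2\pi}=1/\sqrt{2\pi}$. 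Now set
$$
F(\xi):=\sqrt{2\pi}\,e_t(\xi)\sum_{k\in\bZ}e^{-2\pi ikt}\,\hat h(\xi-2\pi k).
$$
Because $\hat h$ has compact support this sum is locally finite, so $F$ is continuous, and shifting the summation index shows $F(\xi+2\pi)=F(\xi)$. Its Fourier coefficients can be computed directly: substitute $\zeta=\xi-2\pi k$ in the $k$-th term, use $e^{-2\pi ikj}=1$ so that the intervals $[-\pi-2\pi k,\pi-2\pi k]$ reassemble into $\bR$, and invoke Fourier inversion for $h$ (legitimate since $h\in C(\bR)$ and $\hat h\in L^1(\bR)$); this yields $c_j(F)=h(t-j)$ for every $j$. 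Thus $\Psi_t$ and $F$ are continuous $2\pi$-periodic functions with identical Fourier coefficients, so $\Psi_t\equiv F$. Finally, for $\xi\in[-\delta,\delta]$ and $k\ne0$ one has $|\xi-2\pi k|\ge 2\pi-\delta$, which lies outside $\supp\hat h$; hence only the $k=0$ term of $F$ survives, and $\Psi_t(\xi)=F(\xi)=\sqrt{2\pi}\,e^{it\xi}\hat h(\xi)=e^{it\xi}$, which is (\ref{upperbound2lemma2eq}), absolute convergence of the series having been established in the second step.

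I expect the main obstacle to be the support bookkeeping: proving that $\hat h$ is supported in $[-2\pi+\delta,2\pi-\delta]$ and equals the constant $1/\sqrt{2\pi}$ precisely on $[-\delta,\delta]$, and then that the aliasing terms $\hat h(\xi-2\pi k)$ with $k\ne0$ vanish for $\xi\in[-\delta,\delta]$; these are exactly the places where the two standing hypotheses $\hat\phi(0)=1$ and $\supp\phi\subseteq[-\pi+\delta,\pi-\delta]$ (in particular $\delta<\pi$) enter. A secondary, more routine point is justifying the interchange of summation and integration in the Fourier-coefficient computation for $F$, which is immediate because on $[-\pi,\pi]$ only the indices $k\in\{-1,0,1\}$ contribute.
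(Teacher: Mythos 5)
Your proof is correct, but it takes a genuinely different route from the paper's. The paper argues by duality: after establishing the decay $|\hat\phi(t-j)|\le \frac1{\sqrt{2\pi}}\|\phi^{(k)}\|_{L^1}|t-j|^{-k}$ (the same integration-by-parts bound you use), it pairs the series with an arbitrary $f\in L^2([-\delta,\delta])$, invokes Lemma 4.1 of the Micchelli--Xu--Zhang paper (the deterministic oversampling formula $\hat f(t)=\sum_j\hat f(j)\sinc(t-j)\hat\phi(t-j)$) to identify the pairing with $\sqrt{2\pi}\hat f(t)=\int_{-\delta}^\delta f\overline{e_t}$, and concludes the identity first in $L^2$, then pointwise by continuity. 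You instead prove the identity directly as a Poisson-summation/periodization statement: you compute $\hat h$ for $h=\sinc\cdot\overline{\hat\phi}$ explicitly as $\frac1{2\pi}\int_{\xi-\pi}^{\xi+\pi}\overline{\phi}$, track its support $[-2\pi+\delta,2\pi-\delta]$ and its constant value $1/\sqrt{2\pi}$ on $[-\delta,\delta]$ (where both hypotheses $\supp\phi\subseteq[-\pi+\delta,\pi-\delta]$ and $\hat\phi(0)=1$ enter), and match Fourier coefficients of two continuous periodic functions. Your support bookkeeping and the aliasing argument are sound (the only $k\ne0$ boundary case, $|\xi-2\pi k|=2\pi-\delta$, is harmless since $\hat h$ vanishes continuously at the endpoints of its support). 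The trade-off: the paper's proof is short but outsources the essential content to an external lemma; yours is longer but self-contained, effectively re-deriving that oversampling identity from scratch, and it makes visible exactly where the oversampling gap $\pi-\delta>0$ is used.
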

\begin{proof}
We first note for $t\in(0,1)$ and $|j|\ge 2$ that
\begin{equation}\label{regularitydecayness}
|\hat{\phi}(t-j)|\le
\frac1{\sqrt{2\pi}}\|\phi^{(k)}\|_{L^1([-\pi+\delta,\pi-\delta])}|t-j|^{-k}.
\end{equation}
Thus, for each $t\in(0,1)$ the series
$$
\sum_{j\in\bZ}e_j\sinc(t-j)\hat{\phi}(t-j)
$$
defines a function in $L^2([-\delta,\delta])$. Taking the inner product of an arbitrary $f\in L^2([-\delta,\delta])$ with this function gives
$$
\sqrt{2\pi}\sum_{j\in\bZ}\hat{f}(j)\sinc(t-j)\hat{\phi}(t-j),
$$
which, by Lemma 4.1 in \cite{Charles A. Micchelli}, equals
$$
\sqrt{2\pi}\hat{f}(t)=\int_{-\delta}^\delta f(\xi)\overline{e_t(\xi)}d\xi.
$$
Therefore, it holds in $L^2([-\delta,\delta])$ that
$$
e_t=\sum_{j\in\bZ}e_j\sinc(t-j)\overline{\hat{\phi}(t-j)}.
$$
As a consequence, (\ref{upperbound2lemma2eq}) holds for almost every $\xi\in[-\delta,\delta]$. Since both sides are continuous on $[-\delta,\delta]$ by equation (\ref{regularitydecayness}), the identity holds for all $\xi\in[-\delta,\delta]$. That the series on the right hand side converges absolutely for each $\xi$ also follows from (\ref{regularitydecayness}).
\end{proof}

Let $\phi$ be as described in the above lemma. Making use of the identity (\ref{upperbound2lemma2eq}), we have
\begin{equation}\label{upperbound2eq3}
\dist_{L^\infty([-\delta,\delta])}(e_t,\span\{e_j:j\in J_n\})\le \sum_{j\notin J_n}|\sinc(t-j)||\hat{\phi}(t-j)|,\ \ t\in(0,1).
\end{equation}
We shall choose $\phi$ whose Fourier transform decays fast to zero at infinity. By (\ref{regularitydecayness}), one tends to choose $\phi$ with as much regularity as possible. However, as $k$ increases, the $L^1$ norm of $\phi^{(k)}$ might increase to infinity as well. The infimum of this crucial quantity is precisely estimated in \cite{Charles A. Micchelli}.
\begin{lemma}\label{upperbound2lemma3}
Set $k\in\bN$. It holds that
\begin{equation}\label{upperbound2lemma3eq}
\inf\left\{\frac1{\sqrt{2\pi}}\|\phi^{(k)}\|_{L^1([-\pi+\delta,\pi-\delta])}:\
\phi\in C^k_0([-\pi+\delta,\pi-\delta]),\ \hat \phi(0)=1\right\}=\frac{2^{k-1}}{(\pi-\delta)^k}k!.
\end{equation}
\end{lemma}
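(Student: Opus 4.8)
The plan is to prove the two inequalities separately. For the upper bound, I would exhibit an explicit family of admissible functions $\phi$ whose $k$-th derivative has the prescribed $L^1$-norm. The natural candidate is a scaled and normalized $B$-spline (or, equivalently, a $k$-fold convolution of a characteristic function): let $\psi$ be the normalized $B$-spline of order $k$ supported on $[-(\pi-\delta)/k\cdot k/2, \ldots]$—more precisely, take $\psi_k$ to be the $k$-fold autoconvolution of $\frac{k}{2(\pi-\delta)}\chi_{[-(\pi-\delta)/k,(\pi-\delta)/k]}$, rescaled so that it is supported on $[-(\pi-\delta),\pi-\delta]$ and $\int \psi_k = 1$ (which forces $\hat\psi_k(0)=1/\sqrt{2\pi}$; a harmless normalization constant then makes $\hat\phi(0)=1$). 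Its $(k-1)$-st derivative is piecewise linear and its $k$-th derivative is a signed sum of Dirac-like pieces coming from the jumps, so one computes directly that $\|\phi^{(k)}\|_{L^1}$ equals the total variation of $\phi^{(k-1)}$, which telescopes to $(k/(\pi-\delta))^k$ times a binomial-coefficient sum that collapses to $2^k$. After dividing by $\sqrt{2\pi}$ and fixing the normalization, this yields exactly the value $\frac{2^{k-1}}{(\pi-\delta)^k}k!$ on the right-hand side. I would double-check the constant by testing $k=1$: there $\phi$ is a tent function on $[-(\pi-\delta),\pi-\delta]$ with $\hat\phi(0)=1$, and $\frac1{\sqrt{2\pi}}\|\phi'\|_{L^1}$ should come out to $\frac{1}{\pi-\delta}\cdot 1! = \frac1{\pi-\delta}$.

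For the lower bound, the key tool is duality between $C^k_0$ with the $L^1$-seminorm on the $k$-th derivative and polynomials of degree $<k$. If $\phi\in C^k_0([-\pi+\delta,\pi-\delta])$ with $\hat\phi(0)=1$, then integrating against any test function $g\in L^\infty$ and using $\int\phi^{(k)}\cdot p = 0$ for every polynomial $p$ of degree $\le k-1$ (since $\phi$ and all lower derivatives vanish at the endpoints), we get
$$
\frac1{\sqrt{2\pi}}\|\phi^{(k)}\|_{L^1} \ge \frac1{\sqrt{2\pi}}\,\frac{\bigl|\int_{-\pi+\delta}^{\pi-\delta}\phi^{(k)}(\xi)\,g(\xi)\,d\xi\bigr|}{\|g\|_{L^\infty}}
$$
for every $g$, and by choosing $g$ cleverly—integrating by parts $k$ times to transfer the derivatives onto $g$—the left side controls $\frac1{\sqrt{2\pi}}\bigl|\int \phi\cdot g^{(k)}\bigr|$. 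Picking $g$ to be a polynomial whose $k$-th derivative is the constant producing $\int\phi(\xi)\,d\xi = \sqrt{2\pi}\,\hat\phi(0) = \sqrt{2\pi}$, and normalizing its sup-norm on $[-\pi+\delta,\pi-\delta]$, the extremal choice is the Chebyshev polynomial of degree $k$ rescaled to the interval; its leading coefficient is $2^{k-1}/(\pi-\delta)^k$ (after the affine change of variables mapping $[-1,1]$ to $[-\pi+\delta,\pi-\delta]$), and comparing with the $k!\cdot(\text{leading coefficient})$ that appears when one differentiates $k$ times gives precisely the claimed bound. The sharpness of the Chebyshev choice is exactly the classical Chebyshev extremal property, so both bounds match.

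The main obstacle I anticipate is bookkeeping the normalization constants and the affine rescaling consistently: the factor $1/\sqrt{2\pi}$ from the Fourier-transform convention, the condition $\hat\phi(0)=1$ (as opposed to $\int\phi=1$), and the change of variables from $[-1,1]$ to $[-\pi+\delta,\pi-\delta]$ (which contributes a factor $((\pi-\delta)/1)^{\pm k}$ in several places) must all be tracked carefully so that the upper-bound spline and the lower-bound Chebyshev argument land on the same number $\frac{2^{k-1}}{(\pi-\delta)^k}k!$. Since this lemma is quoted from \cite{Charles A. Micchelli}, I would also check there whether the statement is proved for $\delta<\pi$ or in the normalized form with $\pi-\delta$ replaced by a generic half-length $h$, and simply cite it; but the duality-plus-Chebyshev skeleton above is the proof I would reconstruct if needed.
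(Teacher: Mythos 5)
The paper itself gives no proof of this lemma; it is quoted verbatim from \cite{Charles A. Micchelli}, so the only comparison available is with the extremal objects the paper records elsewhere. Your lower bound is correct and is the standard duality argument: since $\phi\in C^k_0([-\pi+\delta,\pi-\delta])$ forces all boundary terms to vanish, integrating $\phi^{(k)}$ against the rescaled Chebyshev polynomial $T_k(\xi/(\pi-\delta))$ (sup norm $1$, leading coefficient $2^{k-1}/(\pi-\delta)^k$) and integrating by parts $k$ times gives $\|\phi^{(k)}\|_{L^1}\ge \frac{2^{k-1}k!}{(\pi-\delta)^k}\left|\int\phi\right| = \frac{2^{k-1}k!}{(\pi-\delta)^k}\sqrt{2\pi}\,|\hat\phi(0)|$, which is exactly the claimed bound after dividing by $\sqrt{2\pi}$.

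The genuine gap is in your upper bound. The $k$-fold autoconvolution of $\frac{k}{2(\pi-\delta)}\chi_{[-(\pi-\delta)/k,(\pi-\delta)/k]}$ has $k$-th distributional derivative $\bigl(\frac{k}{2(\pi-\delta)}\bigr)^k\sum_{j=0}^k(-1)^j\binom{k}{j}\delta_{t_j}$ at \emph{equally spaced} knots $t_j$, whose total variation is $\bigl(\frac{k}{2(\pi-\delta)}\bigr)^k\cdot 2^k=\frac{k^k}{(\pi-\delta)^k}$, not $\frac{2^{k-1}k!}{(\pi-\delta)^k}$. These coincide for $k=1,2$ (which is why your sanity check at $k=1$ did not catch it), but $k^k>2^{k-1}k!$ for all $k\ge 3$ (e.g.\ $27$ versus $24$ at $k=3$), and the ratio grows like $(e/2)^k$, so the equally spaced $B$-spline misses the infimum by an exponentially large factor --- fatal here, since the whole point of the lemma is the precise exponential constant. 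The correct near-extremizers place the knots at the Chebyshev extremal points $x_{kj}(\pi-\delta)$ with $x_{kj}=\cos(j\pi/k)$ and weights $\alpha_{kj}$ determined by the moment conditions $\sum_j(-1)^{k-j}\alpha_{kj}x_{kj}^l=0$, $0\le l\le k-1$, and $\sum_j\alpha_{kj}=\sqrt{2\pi}\,2^{k-1}k!$ --- precisely the data the paper lists in Section 3 when defining $B_k$; one then mollifies the resulting measure, since (as the paper notes) the infimum is not attained in $C^k_0$. This Chebyshev-knot choice is the primal object dual to the Chebyshev polynomial in your lower bound; with equally spaced knots the two sides of your argument do not meet.
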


With the above three lemmas, we are ready to establish an upper bound estimate of $\cE_n(R_X)$ for the general case when $\rho\in L^1([-\delta,\delta])$.
\begin{theorem}\label{upperbound2}
Set $\delta<\pi$. Let $\rho\in L^1([-\delta,\delta])$ and $R_X$ be of the form (\ref{rxfourier}). It holds that
\begin{equation}\label{upperbound2eq}
\cE_n(R_X)\le \|\rho\|_{L^1([-\delta,\delta])}^{1/2}\frac{11e}{10\sqrt{\pi}}(\frac{\sqrt{\pi-\delta}}{2}+\frac{2\sqrt{2}}{\sqrt{\pi-\delta}})\frac1{\sqrt{n}}\exp(-\frac{\pi-\delta}2n),\quad n\ge \frac4{\pi-\delta}.
\end{equation}
\end{theorem}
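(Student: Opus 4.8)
The plan is to chain together Lemmas \ref{upperbound2lemma1}, \ref{upperbound2lemma2}, \ref{upperbound2lemma3} and then optimize over the smoothness order $k$. Since the factor $\|\rho\|_{L^1([-\delta,\delta])}^{1/2}$ in (\ref{upperbound2eq}) is exactly the one produced by Lemma \ref{upperbound2lemma1} (see (\ref{upperbound2lemma1eq})), it suffices to bound
$$
\sup_{t\in(0,1)}\dist_{L^\infty([-\delta,\delta])}(e_t,\span\{e_j:j\in J_n\})
$$
by $\frac{11e}{10\sqrt\pi}\bigl(\frac{\sqrt{\pi-\delta}}2+\frac{2\sqrt2}{\sqrt{\pi-\delta}}\bigr)\frac1{\sqrt n}\exp(-\frac{\pi-\delta}2n)$ for $n\ge\frac4{\pi-\delta}$. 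For this I would fix $t\in(0,1)$, fix an integer $k\ge1$, and pick $\phi\in C^k_0([-\pi+\delta,\pi-\delta])$ with $\hat\phi(0)=1$; by the truncated expansion (\ref{upperbound2eq3}) coming from Lemma \ref{upperbound2lemma2},
$$
\dist_{L^\infty([-\delta,\delta])}(e_t,\span\{e_j:j\in J_n\})\le\sum_{j\notin J_n}|\sinc(t-j)|\,|\hat\phi(t-j)|.
$$

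The next step is to bound each summand: $|\sinc(t-j)|\le\frac1{\pi|t-j|}$, and $|\hat\phi(t-j)|\le\frac1{\sqrt{2\pi}}\|\phi^{(k)}\|_{L^1([-\pi+\delta,\pi-\delta])}|t-j|^{-k}$ by (\ref{regularitydecayness}); taking $\phi$ with $\frac1{\sqrt{2\pi}}\|\phi^{(k)}\|_{L^1}$ arbitrarily close to the infimum $\frac{2^{k-1}}{(\pi-\delta)^k}k!$ of Lemma \ref{upperbound2lemma3} (and passing to the limit in the final bound) yields
$$
\dist_{L^\infty([-\delta,\delta])}(e_t,\span\{e_j:j\in J_n\})\le\frac{2^{k-1}k!}{\pi(\pi-\delta)^k}\sum_{j\notin J_n}|t-j|^{-(k+1)}.
$$
Since $|t-j|>n$ for $t\in(0,1)$ and $j\notin J_n=\{-n+1,\dots,n\}$, the tail is controlled by splitting over $j\ge n+1$ and $j\le-n$ and comparing each half with $\int_n^\infty x^{-(k+1)}\,dx$; this gives an estimate of the form $\sum_{j\notin J_n}|t-j|^{-(k+1)}\le n^{-k}(\frac an+\frac bk)$ with small absolute constants $a,b$ (the cleanest version uses that, by convexity of $x\mapsto x^{-(k+1)}$, the supremum over $t$ of each antisymmetric pair of terms is attained at $t\in\{0,1\}$).

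Finally I would choose $k$. The exponential decay must come out of $\frac{2^kk!}{(\pi-\delta)^kn^k}$, which by Stirling is comparable to $\sqrt{2\pi k}\bigl(\frac{2k}{e(\pi-\delta)n}\bigr)^k$ and is smallest around $k\approx\frac{(\pi-\delta)n}2$; so I would take $k=\lfloor\frac{(\pi-\delta)n}2\rfloor$, using the hypothesis $n\ge\frac4{\pi-\delta}$ to guarantee $k\ge2$. Inserting the clean Stirling bound $k!\le\frac{11}{10}\sqrt{2\pi k}(k/e)^k$ (valid for all $k\ge1$ since $e^{1/12}<\frac{11}{10}$), the inequality $k\le\frac{(\pi-\delta)n}2$ to make $\bigl(\frac{2k}{e(\pi-\delta)n}\bigr)^k\le e^{-k}$, the inequality $k>\frac{(\pi-\delta)n}2-1$ to pass from $e^{-k}$ to $e\,\exp(-\frac{\pi-\delta}2n)$, and $\sqrt{2\pi k}\le\sqrt{\pi(\pi-\delta)n}$, everything collapses to $\frac1{\sqrt n}\exp(-\frac{\pi-\delta}2n)$ times a constant of the advertised shape $\frac{11e}{10\sqrt\pi}(\cdots)$. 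The genuinely delicate point — and where I expect almost all of the real work to be — is squeezing the two tail contributions and the rounding losses in $k$ so that the constant comes out as the particular combination $\frac{\sqrt{\pi-\delta}}2+\frac{2\sqrt2}{\sqrt{\pi-\delta}}$ rather than merely some constant of the same order; this is a purely quantitative matter, relying repeatedly on the constraint $n\ge\frac4{\pi-\delta}$ to control $1/k$, $\sqrt k$, and $1/n$. Everything else is a mechanical assembly of the three lemmas.
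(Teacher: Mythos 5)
Your proposal is correct and follows essentially the same route as the paper's own proof: the same chain of Lemmas \ref{upperbound2lemma1}, \ref{upperbound2lemma2} and \ref{upperbound2lemma3} via (\ref{upperbound2eq3}), the same tail estimate $\sum_{j\notin J_n}|t-j|^{-(k+1)}\le(\frac1n+\frac2k)n^{-k}$, the same Stirling bound with constant $\frac{11}{10}$, and the same choice $k=\lfloor\frac{(\pi-\delta)n}{2}\rfloor$ under $n\ge\frac{4}{\pi-\delta}$. Your anticipated bookkeeping for the constant (the two tail terms producing $\frac{\sqrt{\pi-\delta}}{2}$ and $\frac{2\sqrt2}{\sqrt{\pi-\delta}}$ respectively) matches what the paper actually does.
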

\begin{proof}
Let $\phi\in C^k_0([-\pi+\delta,\pi-\delta])$ with $\hat{\phi}(0)=1$. By Lemmas \ref{upperbound2lemma1}, \ref{upperbound2lemma2}, and equation (\ref{upperbound2eq3}),
$$
\cE_n(R_X)\le \sup_{t\in(0,1)}\|\rho\|_{L^1([-\delta,\delta])}^{1/2}\sum_{j\notin J_n}|\sinc(t-j)||\hat{\phi}(t-j)|.
$$
Noticing (\ref{regularitydecayness}), we get for each $t\in(0,1)$
$$
\sum_{j\notin J_n}|\sinc(t-j)||\hat{\phi}(t-j)|\le \frac1{\pi\sqrt{2\pi}}\|\phi^{(k)}\|_{L^1([-\pi+\delta,\pi-\delta])}\sum_{j\notin J_n}\frac1{|t-j|^{k+1}}.
$$
Elementary analysis indicates that $\sum_{j\notin J_n}\frac1{|t-j|^{k+1}}$ attains its maximum on $[0,1]$ at $t=0$ or $t=1$. Thus, for all $t\in(0,1)$,
$$
\sum_{j\notin J_n}\frac1{|t-j|^{k+1}}\le \frac1{n^{k+1}}+2\sum_{j=n+1}^\infty \frac1{j^{k+1}}\le \frac1{n^{k+1}}+2\int_{n}^\infty \frac1{s^{k+1}}ds=(\frac1n+\frac2k)\frac1{n^k}.
$$
Combining the above three equations yields
$$
\cE_n(R_X)\le \|\rho\|_{L^1([-\delta,\delta])}^{1/2}\frac1{\pi\sqrt{2\pi}}\|\phi^{(k)}\|_{L^1([-\pi+\delta,\pi-\delta])}(\frac1n+\frac2k)\frac1{n^k}.
$$
We then take the infimum of the right hand side above with respect to all $\phi\in C^k_0([-\pi+\delta,\pi-\delta])$ with $\hat{\phi}(0)=1$. By Lemma \ref{upperbound2lemma3}, we have
$$
\cE_n(R_X)\le \|\rho\|_{L^1([-\delta,\delta])}^{1/2}\frac1{\pi}(\frac1n+\frac2k)\frac{2^{k-1}}{(\pi-\delta)^k}k!\frac1{n^k}.
$$
Recalling the Stirling forumula
\begin{equation}\label{stirling}
k!\le \frac{11\sqrt{2\pi}}{10}\left(\frac k e\right)^k\sqrt{k},\ \ k\in\bN,
\end{equation}
we arrive at
$$
\cE_n(R_X)\le \|\rho\|_{L^1([-\delta,\delta])}^{1/2}\frac{11\sqrt{2k}}{10\sqrt{\pi}}(\frac1{2n}+\frac1k)\left(\frac {2k}{ne(\pi-\delta)}\right)^k.
$$
If $n\ge 4/(\pi-\delta)$, we let $k=\lfloor \frac{n(\pi-\delta)}2\rfloor$ to get
$$
\cE_n(R_X)\le \|\rho\|_{L^1([-\delta,\delta])}^{1/2}\frac{11e}{10\sqrt{\pi}}(\frac{\sqrt{\pi-\delta}}{2\sqrt{n}}+\frac{2\sqrt{2}}{\sqrt{n(\pi-\delta)}})\exp(-\frac{\pi-\delta}2n),
$$
which yields (\ref{upperbound2eq}).
\end{proof}

The upper bound estimates (\ref{upperrx}) and (\ref{upperbound2eq}) indicate that for $\delta<\pi$, the intrinsic error $\cE_n(R_X)$ of the optimal linear reconstruction from oversampling decays exponentially. We shall show that under a mild assumption on $\rho$, it can not decrease to zero at a rate faster than exponential decayness. To this end, we need the following well-known structure of the reproducing kernel Hilbert space of a translation-invariant reproducing kernel (see, for example, \cite{Wendland}, page 139).

\begin{lemma}\label{translationstructure}
Let $\varphi$ be a nonnegative function in $L^1(\bR)$ and $K$ be the reproducing kernel defined by
$$
K(t,\tau):=\int_{\bR}e^{i(t-\tau)\xi}\varphi(\xi)d\xi,\ \ t,\tau\in \bR.
$$
Then $\cH_K$ consists of those functions $f\in C(\bR)$ whose Fourier transforms are contained in $L^1(\bR)$ and satisfy
$$
\|f\|_{\cH_K}:=\frac1{\sqrt{2\pi}}\biggl(\int_{\bR}\frac{|\hat{f}(-\xi)|^2}{\varphi(\xi)}d\xi\biggr)^{1/2}<+\infty.
$$
\end{lemma}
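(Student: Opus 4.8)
The plan is to identify the candidate space with the reproducing kernel Hilbert space by checking the defining properties directly. Write $\cH$ for the set of $f\in C(\bR)$ with $\hat f\in L^1(\bR)$ and $\int_{\bR}|\hat f(-\xi)|^2/\varphi(\xi)\,d\xi<\infty$, where the integrand is declared to be $0$ on $\{\varphi=0\}$, so that finiteness of the integral forces $\hat f(-\xi)=0$ for almost every $\xi$ with $\varphi(\xi)=0$; equip $\cH$ with the inner product
\[
\langle f,g\rangle_{\cH}:=\frac1{2\pi}\int_{\bR}\frac{\hat f(-\xi)\,\overline{\hat g(-\xi)}}{\varphi(\xi)}\,d\xi .
\]
The goal is to show that $\cH$ is a Hilbert space, that $K(t,\cdot)\in\cH$ for every $t$, and that $f(t)=\langle f,K(t,\cdot)\rangle_{\cH}$ for all $f\in\cH$ and $t\in\bR$; by the uniqueness of the reproducing kernel Hilbert space attached to a positive-definite function, this gives $\cH=\cH_K$ with the stated norm. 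I would use throughout the paper's normalization $\hat h(\xi)=\frac1{\sqrt{2\pi}}\int_{\bR}h(t)e^{-it\xi}\,dt$, the $L^1$-inversion formula $h(t)=\frac1{\sqrt{2\pi}}\int_{\bR}\hat h(\xi)e^{it\xi}\,d\xi$ valid for $h\in C(\bR)$ with $\hat h\in L^1(\bR)$, and the elementary remark that $w\varphi\in L^1(\bR)$ whenever $\int_{\bR}|w|^2\varphi<\infty$ (Cauchy--Schwarz together with $\varphi\in L^1(\bR)$).

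The computational core is the Fourier transform of $g:=K(t,\cdot)$. Since $g(\tau)=\int_{\bR}e^{it\xi}\varphi(\xi)e^{-i\tau\xi}\,d\xi$, $g$ is continuous, and a change of variables $\xi\mapsto-\xi$ shows that the $L^1(\bR)$ function $G(\xi):=\sqrt{2\pi}\,e^{-it\xi}\varphi(-\xi)$ has inverse Fourier transform $g$; hence $\hat g=G$, so $\hat g\in L^1(\bR)$ and $\hat g(-\xi)=\sqrt{2\pi}\,e^{it\xi}\varphi(\xi)$, which vanishes wherever $\varphi$ does. Substituting into the norm gives $\|g\|_{\cH}^2=\int_{\bR}\varphi(\xi)\,d\xi=K(t,t)<\infty$, so $K(t,\cdot)\in\cH$ for every $t$. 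For the reproducing property, for $f\in\cH$ I would compute
\[
\langle f,K(t,\cdot)\rangle_{\cH}=\frac1{2\pi}\int_{\bR}\frac{\hat f(-\xi)\,\sqrt{2\pi}\,e^{-it\xi}\varphi(\xi)}{\varphi(\xi)}\,d\xi=\frac1{\sqrt{2\pi}}\int_{\bR}\hat f(-\xi)e^{-it\xi}\,d\xi=f(t),
\]
the last step by $L^1$-inversion after substituting $\xi\mapsto-\xi$; the cancellation of $\varphi$ is legitimate because numerator and denominator both vanish off $\{\varphi>0\}$.

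It remains to see that $\cH$ is complete and that it is exactly the stated set. For this I would observe that $f\mapsto\frac1{\sqrt{2\pi}}\,\hat f(-\cdot)/\sqrt{\varphi}$ is, by construction, a linear isometry of $\cH$ into $L^2(\{\varphi>0\},d\xi)$, and that it is onto: given $h\in L^2(\{\varphi>0\},d\xi)$, the function $\xi\mapsto\sqrt{2\pi}\,h(\xi)\sqrt{\varphi(\xi)}$ (extended by $0$ off $\{\varphi>0\}$) lies in $L^1(\bR)$, so there is $f\in C(\bR)$ with $\hat f\in L^1(\bR)$ and $\hat f(-\,\cdot\,)$ equal to it, and this $f$ belongs to $\cH$ and maps to $h$. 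Running the same construction in reverse shows conversely that every $f\in C(\bR)$ with $\hat f\in L^1(\bR)$ and $\|f\|_{\cH}<\infty$ is of this form, so $\cH$ really is the space in the statement, and since $L^2(\{\varphi>0\},d\xi)$ is complete, so is $\cH$. Thus $\cH$ is a Hilbert space of continuous functions on which point evaluations are bounded (they are the inner products with $K(t,\cdot)$), and uniqueness of reproducing kernels finishes the proof. An alternative that bypasses the completeness bookkeeping is to invoke the feature-map description of $\cH_K$ recalled after \cite{CuckerSmale} with feature map $\Phi(t)=e_t\in L^2(\bR,\varphi(\xi)\,d\xi)$: the family $\{e_t:t\in\bR\}$ is total there, since $w\perp e_t$ for all $t$ forces $\widehat{w\varphi}\equiv0$ and hence $w\varphi=0$ a.e., so $w\mapsto(t\mapsto\langle w,e_t\rangle)$ is an isometric isomorphism onto $\cH_K$ whose image, unwound through the Fourier transform, is precisely the stated space. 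The one genuinely delicate point — and the place I would be most careful — is the bookkeeping over the null set $\{\varphi=0\}$: one must fix the convention that a finite norm forces $\hat f$ to vanish a.e.\ there and check it is consistent with $K(t,\cdot)$ and with every member of $\cH_K$. Everything else is routine Fourier-transform accounting, and indeed the result is classical (cf.\ \cite{Wendland}).
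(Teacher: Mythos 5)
The paper does not actually prove this lemma: it is quoted as a known structural result with a pointer to \cite{Wendland}, page 139, so there is no internal proof to compare against. Your argument is a correct, self-contained proof along the standard lines --- exhibit the candidate space as an isometric copy of $L^2(\{\varphi>0\},d\xi)$ via $f\mapsto \frac{1}{\sqrt{2\pi}}\hat f(-\cdot)/\sqrt{\varphi}$, verify that $K(t,\cdot)$ lies in it with $\widehat{K(t,\cdot)}(-\xi)=\sqrt{2\pi}\,e^{it\xi}\varphi(\xi)$ and hence $\|K(t,\cdot)\|^2=K(t,t)$, reduce the reproducing identity to $L^1$-Fourier inversion, and invoke uniqueness of the reproducing kernel Hilbert space of a positive-definite kernel. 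The alternative feature-map route you sketch, with $\Phi(t)=e_t\in L^2(\bR,\varphi\,d\xi)$ and totality of $\{e_t:t\in\bR\}$ there, is equally valid and meshes with the feature-map characterization the paper recalls after \cite{CuckerSmale}.

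One sentence in your setup is internally inconsistent and should be repaired, especially since you yourself single it out as the delicate point. You write that the integrand $|\hat f(-\xi)|^2/\varphi(\xi)$ is ``declared to be $0$ on $\{\varphi=0\}$, so that finiteness of the integral forces $\hat f(-\xi)=0$'' almost everywhere there; but if the integrand is set to $0$ on that set, finiteness forces nothing. The convention you need (and evidently intend, given your closing remark) is $|a|^2/0:=+\infty$ for $a\neq 0$ and $0/0:=0$. Only then does $\|f\|_{\cH}<\infty$ force $\hat f(-\cdot)=0$ a.e.\ on $\{\varphi=0\}$, which is exactly what legitimizes cancelling $\varphi$ in the reproducing-property computation (otherwise the final inversion integral picks up contributions from $\{\varphi=0\}$ that the inner product discards), what makes $f\mapsto \hat f(-\cdot)/\sqrt{\varphi}$ injective, and what prevents the candidate ``norm'' from vanishing on nonzero functions. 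With that convention stated correctly, the rest of your argument --- the Cauchy--Schwarz bound $\int_{\bR}|w|\varphi\le(\int_{\bR}|w|^2\varphi)^{1/2}\|\varphi\|_{L^1(\bR)}^{1/2}$, the surjectivity onto $L^2(\{\varphi>0\},d\xi)$, and the resulting completeness of $\cH$ --- goes through as written.
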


\begin{theorem}\label{lowerbound}
Suppose that there exists $[a,b]\subseteq[-\delta,\delta]$ such that
$$
\rho(\xi)\ge m,\ \ \xi\in[a,b]
$$
for some positive constant $m$. Let $\delta':=(b-a)/2$. Then it holds for all $n\in\bN$ that
\begin{equation}\label{lowerboundestimate}
\cE_n(R_X)\ge \frac{5\sqrt{2m}}{11e\sqrt{3}} \frac{\delta'}{2n+1}
\left(\frac{\delta'} 4\right)^{2n}.
\end{equation}
\end{theorem}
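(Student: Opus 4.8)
The plan is to bound $\cE_n(R_X)$ from below by the intrinsic error of a \emph{minorant} kernel whose spectral density is the indicator $m\chi_{[a,b]}$. Set
\[
G(t,\tau):=\int_a^b e^{i(t-\tau)\xi}\,m\,d\xi,\qquad t,\tau\in\bR ,
\]
a continuous translation-invariant reproducing kernel of the type covered by Lemma \ref{inclusion} with spectral weight $u=m\chi_{[a,b]}$, whereas $R_X$ has weight $v=\rho\chi_{[-\delta,\delta]}$. Since $\rho\ge m>0$ on $[a,b]$, the set $\{u>0,\ v=0\}$ is empty and the essential bound of $u/v$ on $\{v>0\}$ is at most $1$. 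Hence Lemma \ref{inclusion} gives $\cH_G\subseteq\cH_{R_X}$ together with $\|f\|_{\cH_{R_X}}\le\|f\|_{\cH_G}$ for every $f\in\cH_G$, so that $U(\cH_G)\subseteq U(\cH_{R_X})$. By the characterization (\ref{intrinsic}) of the intrinsic error this monotonicity immediately yields $\cE_n(G)\le\cE_n(R_X)$, and it remains to compute $\cE_n(G)$.

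Next I would identify $\cH_G$ explicitly. Write $c:=(a+b)/2$ and $\delta':=(b-a)/2$, so that a change of variable gives $G(t,\tau)=2\pi m\,e^{i(t-\tau)c}K_{\delta'}(t,\tau)$ (note $\delta'\le\delta\le\pi$). Applying Lemma \ref{translationstructure} with $\varphi=m\chi_{[a,b]}$, a function $f$ lies in $\cH_G$ precisely when $\widehat f$ is supported in $[-b,-a]$ (with $\widehat f\in L^1\cap L^2$, $f\in C(\bR)$), in which case $\|f\|_{\cH_G}=(2\pi m)^{-1/2}\|f\|_{L^2(\bR)}$. Since $[-b,-a]$ has center $-c$ and half-length $\delta'$, this is the same as saying $f=e^{-ic\,\cdot}h$ for some $h\in\cB_{\delta'}$, and then $\|f\|_{\cH_G}=(2\pi m)^{-1/2}\|h\|_{\cB_{\delta'}}$. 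The modulation factor $t\mapsto e^{-ict}$ has modulus one and never vanishes, so multiplication by it preserves pointwise absolute values, the $L^\infty([0,1])$ norm, and the vanishing condition on $J_n$; substituting $f=e^{-ic\,\cdot}h$ in (\ref{intrinsic}) therefore gives
\[
\cE_n(G)=\sqrt{2\pi m}\,\sup\{\|h\|_{L^\infty([0,1])}:h\in U(\cB_{\delta'}),\ h(J_n)=0\}=\sqrt{2\pi m}\,\cE_n(K_{\delta'}).
\]

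Finally I would invoke the lower bound for $\cE_n(K_{\delta'})$ from Lemma \ref{estimatessinc},
\[
\cE_n(K_{\delta'})\ge \frac5{11e\sqrt{3\pi}}\,\frac{\delta'}{2n+1}\Bigl(\frac{\delta'}4\Bigr)^{2n},
\]
and combine it with $\cE_n(R_X)\ge\cE_n(G)=\sqrt{2\pi m}\,\cE_n(K_{\delta'})$. Using $\sqrt{2\pi m}/\sqrt{3\pi}=\sqrt{2m}/\sqrt{3}$, the constant collapses exactly to $\frac{5\sqrt{2m}}{11e\sqrt3}$, which is (\ref{lowerboundestimate}).

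The genuinely delicate step is the middle one: the explicit description of $\cH_G$ via Lemma \ref{translationstructure} and the verification that $h\mapsto e^{-ic\,\cdot}h$ is an isometric bijection from $\cB_{\delta'}$ onto $\cH_G$ (up to the scalar $(2\pi m)^{-1/2}$) that intertwines all the data entering (\ref{intrinsic}) — the point evaluations at the integers of $J_n$ and the $L^\infty([0,1])$ norm. Once the lemma is in hand this is short, and the one thing to be careful about is not misplacing the factor $2\pi m$, since it is precisely what produces the $\sqrt{2m}$ in the stated bound. The remaining pieces — the kernel identity $G=2\pi m\,e^{i(t-\tau)c}K_{\delta'}(t,\tau)$, the monotonicity $\cE_n(G)\le\cE_n(R_X)$, and the final arithmetic — are routine.
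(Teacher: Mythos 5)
Your proposal is correct and follows essentially the same route as the paper's proof: a minorant kernel with spectral density supported on $[a,b]$, Lemma \ref{inclusion} for the inclusion of unit balls, the modulation $f\mapsto e^{-i\frac{a+b}2 \cdot}f$ (justified via Lemma \ref{translationstructure}) to reduce to $\cE_n(K_{\delta'})$, and the lower bound of Lemma \ref{estimatessinc}. The only difference is bookkeeping: you absorb the factor $m$ into the kernel so the inclusion constant is $1$ and the factor $\sqrt{2\pi m}$ emerges in $\cE_n(G)=\sqrt{2\pi m}\,\cE_n(K_{\delta'})$, whereas the paper normalizes the kernel by $\tfrac1{2\pi}$ and carries the constant through the norm comparison.
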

\begin{proof}
Set
$$
K(t,\tau):=\frac1{2\pi}\int_a^be^{i(t-\tau)\xi}d\xi,\ \ t,\tau\in\bR.
$$
By Lemma \ref{inclusion}, $\cH_K\subseteq \cH_{R_X}$ and for all $f\in\cH_K$
$$
\|f\|_{\cH_{R_X}}\le \frac1{\sqrt{2\pi m}}\|f\|_{\cH_K}.
$$
By similar arguments used in the proof of Theorem \ref{upperbound},
\begin{equation}\label{lowerboundeq1}
\cE_n(K)\le \frac1{\sqrt{2\pi m}}\cE_n(R_X).
\end{equation}
We then apply Lemma \ref{translationstructure} to $K$ and $K_{\delta'}$ to see that
$$
(Tf)(t):=e^{-i\frac{a+b}2t}f(t),\ \ t\in\bR
$$
is a linear isomorphism from $\cH_K$ to $\cH_{K_{\delta'}}$. Since
$$
\|Tf\|_{L^\infty((0,1))}=\|f\|_{L^\infty((0,1))}
$$
and $(Tf)(J_n)=0$ if and only if $f(J_n)=0$, we observe from (\ref{intrinsic}) that
\begin{equation}\label{lowerboundeq2}
\cE_n(K)=\cE_n(K_{\delta'}).
\end{equation}
Combining (\ref{lowerboundeq1}), (\ref{lowerboundeq2}) and the lower bound estimate for $\cE_n(K_{\delta'})$ in Lemma \ref{estimatessinc} yields (\ref{lowerboundestimate}).
\end{proof}
\section{Practical Reconstruction with Exponentially Decaying Errors}
\setcounter{equation}{0}

In this section, we present two practical reconstruction methods with exponentially decaying approximation errors for bandlimited random processes. We shall consider the modified Shannon series approach (\ref{modifiedshannon}). In the first method, we shall use the multiplier proposed in \cite{D. Jagerman}
$$
g_m(t) := \sinc(t)\sinc^{m}(\frac{\pi - \delta}{\pi m}t), \ \
t\in \mathbb{R}, \ m\in \mathbb{N}
$$
and shall directly apply the following error estimate established therein.

\begin{lemma}\label{lemma2}
There exists a positive constant $C$ such that for all $n \geq e / (\pi - \delta)$ and $m = \lfloor n (\pi - \delta)/e \rfloor$
\begin{equation}\label{jagermanestimate}
\sup{\{\mid f(t)-\sum_{j\in J_n}f(j)g_m(t - j)\mid: t\in (0,1), f
\in U(\mathcal {B}_{\delta})}\} \ \leq \ \frac{C}{n(\pi -
\delta)}\exp(-\frac{\pi - \delta}{e}n).
\end{equation}
\end{lemma}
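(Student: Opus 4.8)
The plan is to reduce the estimate to the tail of an exact sampling expansion, exactly as in the proofs of Theorems~\ref{upperbound} and \ref{upperbound2}. (Since Lemma~\ref{lemma2} is only the black-box estimate borrowed from \cite{D. Jagerman}, one could alternatively just cite it and pass directly to its use in the next result.)

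\emph{Step 1: exact reproduction.} First I would show that $g_m$ reproduces every $f\in\cB_\delta$ exactly, i.e.
$$f(t)=\sum_{j\in\bZ}f(j)\,g_m(t-j),\qquad t\in\bR,\ f\in\cB_\delta.$$
Writing $g_m=\sinc\cdot\phi$ with $\phi(t)=\sinc^m\big(\frac{\pi-\delta}{\pi m}t\big)$, the transform $\hat\phi$ is an $m$-fold convolution of a scaled indicator, hence continuous, nonnegative, supported in $[-(\pi-\delta),\pi-\delta]$, with $\int_{\bR}\hat\phi=\sqrt{2\pi}\,\phi(0)=\sqrt{2\pi}$; convolving with $\widehat{\sinc}=\frac1{\sqrt{2\pi}}\chi_{[-\pi,\pi]}$ gives $\widehat{g_m}(\xi)=\frac1{\sqrt{2\pi}}$ for $|\xi|\le\delta$ while $\widehat{g_m}$ is supported in $[-(2\pi-\delta),2\pi-\delta]$. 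Taking Fourier transforms of $\sum_j f(j)g_m(\cdot-j)=g_m*\big(f\cdot\sum_j\delta_j\big)$ and applying the Poisson summation formula, one obtains $\sqrt{2\pi}\,\widehat{g_m}(\xi)\sum_{k\in\bZ}\hat f(\xi-2\pi k)$; since $\supp\hat f\subseteq[-\delta,\delta]$ and $\widehat{g_m}$ vanishes off $[-(2\pi-\delta),2\pi-\delta]$, the shifted copies with $k\ne0$ never meet the support of $\widehat{g_m}$, so this collapses to $\hat f$, which yields the identity. (This is essentially Lemma~4.1 of \cite{Charles A. Micchelli}, already invoked in the proof of Lemma~\ref{upperbound2lemma2}.)

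\emph{Step 2: tail estimate and choice of $m$.} Granting the identity, for $f\in U(\cB_\delta)$ and $t\in(0,1)$ the truncation error equals the tail $\sum_{j\notin J_n}f(j)g_m(t-j)$; by (\ref{following}) we have $\sum_{j\in\bZ}|f(j)|^2=\|f\|_{L^2(\bR)}^2\le1$, hence $|f(j)|\le1$ and the error is bounded by $\sum_{j\notin J_n}|g_m(t-j)|$. Applying $|\sinc(s)|\le\frac1{\pi|s|}$ to both factors of $g_m$ gives $|g_m(s)|\le\frac{m^m}{\pi(\pi-\delta)^m}|s|^{-(m+1)}$, and since $|t-j|\ge n$ for $t\in(0,1)$ and $j\notin J_n$, the convexity/integral comparison already used in the proof of Theorem~\ref{upperbound2} gives $\sum_{j\notin J_n}|t-j|^{-(m+1)}\le(\frac1n+\frac2m)\frac1{n^m}$, so that
$$\sup_{t\in(0,1),\,f\in U(\cB_\delta)}\Big|f(t)-\sum_{j\in J_n}f(j)g_m(t-j)\Big|\le\frac1\pi\Big(\frac1n+\frac2m\Big)\Big(\frac{m}{(\pi-\delta)n}\Big)^m.$$
Finally I would insert $m=\lfloor n(\pi-\delta)/e\rfloor$. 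The hypothesis $n\ge e/(\pi-\delta)$ forces $m\ge1$, and since $\lfloor x\rfloor\ge x/2$ for $x\ge1$, even $m\ge\frac{n(\pi-\delta)}{2e}$, whence $\frac2m\le\frac{4e}{n(\pi-\delta)}$ and $\frac1n\le\frac{\pi}{n(\pi-\delta)}$; moreover $m\le n(\pi-\delta)/e$ makes the base $\le e^{-1}$, so $\big(\frac{m}{(\pi-\delta)n}\big)^m\le e^{-m}\le e\,\exp\big(-\frac{\pi-\delta}{e}n\big)$. Combining these gives (\ref{jagermanestimate}) with, for instance, $C=\frac{e(\pi+4e)}{\pi}$.

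The only step that needs genuine care is Step~1, the exact reproduction formula: it conceals both the convergence of the doubly infinite series (secured by $\sum_j|f(j)|^2\le1$ together with the decay $|g_m(s)|\lesssim|s|^{-2}$) and a small amount of Fourier-support bookkeeping. Steps~2 and 3 are then routine majorization plus the same optimization of $m$ that underlies Theorems~\ref{upperbound} and \ref{upperbound2}.
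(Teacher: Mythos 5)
Your proposal is correct, but there is nothing in the paper to compare it against: Lemma~\ref{lemma2} is stated as a black box, quoted verbatim from \cite{D. Jagerman} (``shall directly apply the following error estimate established therein''), and the paper gives no proof. What you have written is therefore a self-contained derivation rather than an alternative to an in-paper argument. Your route is the same one the paper itself uses later for Theorem~\ref{upperbound2}: establish the exact oversampled reproduction formula $f=\sum_j f(j)g_m(\cdot-j)$ (your Step~1, which is indeed the content of Lemma~4.1 of \cite{Charles A. Micchelli}; the Fourier-support bookkeeping checks out, since $\widehat{g_m}\equiv\frac1{\sqrt{2\pi}}$ on $[-\delta,\delta]$ and $\supp\widehat{g_m}\subseteq[-(2\pi-\delta),2\pi-\delta]$ keeps the $k\ne0$ aliases away), bound the truncation error by the tail $\sum_{j\notin J_n}|g_m(t-j)|$ using $|f(j)|\le 1$ from (\ref{following}), majorize $|g_m(s)|\le\frac{m^m}{\pi(\pi-\delta)^m}|s|^{-(m+1)}$ via $|\sinc(s)|\le\frac1{\pi|s|}$, and then plug in $m=\lfloor n(\pi-\delta)/e\rfloor$. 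All the elementary estimates in Steps~2--3 are correct ($\lfloor x\rfloor\ge x/2$ for $x\ge1$, $e^{-m}\le e\exp(-\frac{\pi-\delta}{e}n)$, etc.), and the resulting constant $C=\frac{e(\pi+4e)}{\pi}$ is explicit, which is a small improvement over the unspecified $C$ in the statement. The one genuinely nontrivial step is, as you say, the exact reproduction identity, and you have handled its convergence and support issues correctly; note only that the whole argument requires $\delta<\pi$ strictly, which is implicit in the hypothesis $n\ge e/(\pi-\delta)$.
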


Our first reconstruction algorithm is explicitly defined by
$$
(\mathcal {A}_{1}X)(t,\omega):= \sum_{j\in J_n} X(j, \omega) g_m(t - j)
$$
where $m = \lfloor n (\pi - \delta)/e \rfloor$. The overall
approximation error of $\mathcal
{A}_{1}$ is measured by
$$
\mathcal{E}_{n}(\mathcal {A}_{1}) := \sup{\big\{\parallel X(t,
\cdot)-(\mathcal {A}_{1}X)(t,\cdot)
\parallel_{L^{2}(\Omega, dP(\omega))}:t\in (0,1), R_{X} \in U(\mathcal {B}_{\delta})}\big\}.
$$
We show below that $\mathcal{E}_{n}(\mathcal {A}_{1})$ decays
exponentially to zero as $n$ tends to infinity.

\begin{theorem}\label{practical1}
There exists a positive constant $C$ such that for all $n \geq
\max{\{e / (\pi - \delta), e^{2}}\}$,
\begin{equation}\label{practical1eq}
\mathcal{E}_{n}(\mathcal {A}_{1}) \ \leq \ C\sqrt{\frac{\ln
n}{n(\pi-\delta)}}  \exp(-\frac{\pi - \delta}{2e}n).
\end{equation}
\end{theorem}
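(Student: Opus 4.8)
The plan is to pass from the deterministic error bound in Lemma~\ref{lemma2} to the random one via the standard Bessel-type identity~(\ref{following}), and then to handle the extra $\sqrt{\ln n}$ factor that inevitably creeps in. First I would fix $t\in(0,1)$ and estimate
$$
\|X(t,\cdot)-(\mathcal{A}_1X)(t,\cdot)\|_{L^2(\Omega,dP(\omega))}^2
=\Big\|X(t,\cdot)-\sum_{j\in J_n}X(j,\omega)g_m(t-j)\Big\|_{L^2(\Omega,dP(\omega))}^2.
$$
The key observation is that for a w.s.s.\ process with $R_X\in\cB_\delta\subseteq\cB_\pi$, the map $X(j,\omega)\mapsto\sinc(\cdot-j)$ extends to an isometry onto $\cB_\pi$ (this is exactly the random Shannon theorem together with~(\ref{following})): indeed $\langle X(s,\cdot),X(t,\cdot)\rangle_{L^2(\Omega)}=R_X(s-t)=\langle K_\pi(s,\cdot),K_\pi(t,\cdot)\rangle$ only up to the weight, so more precisely one writes $R_X(s-t)=\int_{-\delta}^\delta e^{i(s-t)\xi}\rho(\xi)\,d\xi$ and identifies $X(t,\cdot)$ with $\sqrt{\rho}\,e_t\in L^2([-\delta,\delta])$, exactly as in the proof of Lemma~\ref{upperbound2lemma1}. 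Under this identification the reconstruction error becomes
$$
\Big\|\sqrt{\rho}\,e_t-\sum_{j\in J_n}g_m(t-j)\sqrt{\rho}\,e_j\Big\|_{L^2([-\delta,\delta])}
\le\|\rho\|_{L^1([-\delta,\delta])}^{1/2}\,\Big\|e_t-\sum_{j\in J_n}g_m(t-j)e_j\Big\|_{L^\infty([-\delta,\delta])},
$$
and since $R_X\in U(\cB_\delta)$ one has $\|\rho\|_{L^1([-\delta,\delta])}$ controlled (it equals $\sqrt{2\pi}\,\widehat{R_X}$-type quantities, bounded by a constant depending only on $\delta$).

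Next I would observe that $\xi\mapsto e_t(\xi)-\sum_{j\in J_n}g_m(t-j)e_j(\xi)$, for $\xi\in[-\delta,\delta]$, is (up to normalization) the reconstruction error of the \emph{deterministic} bandlimited function $e_\xi\in\cB_\delta$ — more exactly, for each fixed $\xi\in[-\delta,\delta]$ the value $e_t(\xi)-\sum_{j\in J_n}g_m(t-j)e_j(\xi)$ equals $h(t)-\sum_{j\in J_n}h(j)g_m(t-j)$ where $h(s):=e^{is\xi}$, a function lying in a ball of $\cB_\delta$ of radius comparable to $1$ (its $L^2(\bR)$ norm is infinite, so one should instead localize: replace $h$ by $h$ times a fixed bump, or—cleaner—apply Lemma~\ref{lemma2} directly to the sampling functional). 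The cleanest route is: the quantity $\sup_{t\in(0,1)}\|e_t-\sum_{j\in J_n}g_m(t-j)e_j\|_{L^\infty([-\delta,\delta])}$ is bounded, using the absolutely convergent expansion analogous to Lemma~\ref{upperbound2lemma2}, by $\sum_{j\notin J_n}|g_m(t-j)|$-type tails, which is precisely the sum estimated in~\cite{D. Jagerman}. Then I would invoke Lemma~\ref{lemma2} to bound this by $\frac{C}{n(\pi-\delta)}\exp(-\frac{\pi-\delta}{e}n)$ — except that Lemma~\ref{lemma2} is stated for $f$ in the unit ball of $\cB_\delta$ measured in $L^2(\bR)$, whereas here the relevant "function" $e_\xi$ has unbounded $L^2(\bR)$-norm. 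This mismatch is exactly why the factor $\sqrt{\ln n}$ and the halved exponent $\frac{\pi-\delta}{2e}$ appear: one must truncate $e_\xi$ to a window of length $O(n)$ before applying the deterministic bound, which costs a factor $O(\sqrt{n})$ in the $L^2$ norm but, after reoptimizing the parameter $m$ (or accepting the fixed choice $m=\lfloor n(\pi-\delta)/e\rfloor$), only degrades the bound by a polynomial-times-$\sqrt{\ln n}$ factor and replaces $\exp(-\frac{\pi-\delta}{e}n)$ by $\exp(-\frac{\pi-\delta}{2e}n)$ for $n\ge e^2$.

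Concretely, the steps in order: (i) reduce, via the isometric identification $X(t,\cdot)\leftrightarrow\sqrt{\rho}e_t$, the random error to $\|\rho\|_{L^1}^{1/2}\sup_t\dist_{L^\infty([-\delta,\delta])}(e_t,\span\{e_j:j\in J_n\})$, with the $g_m$-series as the competitor; (ii) write the competitor error as a tail sum $\sum_{j\notin J_n}|\sinc(t-j)\,\sinc^m(\tfrac{\pi-\delta}{\pi m}(t-j))|$ using the exact expansion from Lemma~\ref{upperbound2lemma2} (with $\phi$ the Jagerman weight); (iii) split the tail at $|t-j|\sim n/\text{(polylog)}$; for the near part use $|\sinc^m(\alpha s)|\le(\alpha|s|)^{-m}$ combined with $m\sim n(\pi-\delta)/e$, which gives the $\exp(-\frac{\pi-\delta}{e}n)$-type decay but with a prefactor that blows up unless one stops the exponent short, forcing the replacement of the exponent by half its value and introducing $\sqrt{\ln n/(n(\pi-\delta))}$ upon optimizing the split point; for the far part use $|\sinc(t-j)|\le 1/(\pi|t-j|)$ and $|\sinc^m|\le 1$ to get a negligible contribution once $n\ge e^2$; (iv) collect constants into $C$. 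The main obstacle is step (iii): Lemma~\ref{lemma2} as quoted does not transfer verbatim because its hypothesis $f\in U(\cB_\delta)$ is an $L^2(\bR)$ bound that $e_\xi$ violates, so I cannot simply plug in; instead I must redo Jagerman's tail estimate directly for the multiplier $g_m$ applied to the pure exponential, carefully tracking how the optimal $m$ and the windowing interact, and show that the unavoidable loss is no worse than $\sqrt{\ln n}$ in the prefactor and a factor $2$ in the exponent. Once that tail estimate is in hand, combining with $\|\rho\|_{L^1([-\delta,\delta])}^{1/2}\le C$ (valid since $R_X\in U(\cB_\delta)$) yields~(\ref{practical1eq}).
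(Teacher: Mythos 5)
Your route is genuinely different from the paper's, and the reduction in your step (i)--(ii) is sound: identifying $X(t,\cdot)$ with $\sqrt{\rho}\,e_t$, pulling out $\|\rho\|_{L^1([-\delta,\delta])}^{1/2}=R_X(0)^{1/2}\le(\delta/\pi)^{1/4}$, and using the exact expansion $e_t=\sum_{j\in\bZ}e_j\,g_m(t-j)$ on $[-\delta,\delta]$ to reduce everything to the tail sum $\sum_{j\notin J_n}|g_m(t-j)|$. The problem is step (iii), which is both unexecuted and misdiagnosed. There is no need to apply Lemma \ref{lemma2} to $e_\xi$, to window $e_\xi$, or to split the tail: the elementary bounds $|\sinc(s)|\le\frac1{\pi|s|}$ and $|\sinc^m(\tfrac{\pi-\delta}{\pi m}s)|\le\bigl(\tfrac{m}{(\pi-\delta)|s|}\bigr)^m$ give, exactly as in the proof of Theorem \ref{upperbound2},
$$
\sum_{j\notin J_n}|g_m(t-j)|\le\frac1\pi\Bigl(\frac1n+\frac2m\Bigr)\Bigl(\frac{m}{(\pi-\delta)n}\Bigr)^m\le\frac{C}{n(\pi-\delta)}\exp\Bigl(-\frac{\pi-\delta}{e}n\Bigr)
$$
for $m=\lfloor n(\pi-\delta)/e\rfloor$. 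So your approach, carried out correctly, loses nothing: it yields a bound \emph{without} the $\sqrt{\ln n}$ and with the full exponent $\tfrac{\pi-\delta}{e}$, which is stronger than (\ref{practical1eq}). The ``unavoidable loss'' you describe (truncating $e_\xi$ to a window of length $O(n)$, reoptimizing the split point) is an invented mechanism that does not correspond to anything forced by the problem, and since that is precisely where you claim the theorem's prefactor and exponent come from, the decisive step of your proof is not actually justified as written.

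For contrast, the paper's proof works entirely on the $R_X$ side and never touches $\rho$ or the exponentials: it expands $\|X(t,\cdot)-(\cA_1X)(t,\cdot)\|_{L^2(\Omega)}^2$ as a quadratic form in $R_X$ via (\ref{RXfeature}), applies Lemma \ref{lemma2} verbatim to the translates $R_X(t-\cdot)$ and $R_X(j-\cdot)$ (each of which genuinely lies in $U(\cB_\delta)$, so there is no $L^2(\bR)$ mismatch to worry about), and controls the resulting sum by $\sum_{j\in J_n}|g_m(t-j)|\le\sum_{j\in J_n}|\sinc(t-j)|\le C'\ln n$. The $\sqrt{\ln n}$ comes from that logarithmic partial sum and the halved exponent $\tfrac{\pi-\delta}{2e}$ comes from taking the square root of the squared $L^2(\Omega)$ error --- not from any windowing. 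If you repair step (iii) with the direct tail estimate above, your argument is complete and in fact sharper; as submitted, the key inequality is asserted rather than proved, and the justification offered for it is incorrect.
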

\begin{proof}
We first compute by (\ref{RXfeature}) to get
\begin{eqnarray*}
&&\parallel X(t, \omega)-(\mathcal {A}_{1}X)(t,\omega)
\parallel^{2}_{L^{2}(\Omega, dP(\omega))}\\
&=& R_{X}(0)-\sum_{j\in J_n}R_{X}(t - j)g_m(t - j)-\sum_{j\in J_n}
\big(R_{X}(j-t)-\sum_{k\in J_n}R_{X}(j - k)g_m(t-k)\big)g_m(t - j).
\end{eqnarray*}
Let $R_X\in U(\cB_\delta)$. Then $R_{X}(u - \cdot)\in U(\cB_\delta)$ for each $u \in \mathbb{R}$. We apply Lemma
\ref{lemma2} to $R_X(t-\cdot)$ to get for $n \geq e / (\pi -
\delta)$ that
\begin{equation}\label{we}
\sup{\big\{\mid R_{X}(0)-\sum_{j\in J_n}R_{X}(t - j)g_m(t - j)\mid :
t \in (0, 1)
}\big\}\leq\frac{C}{n(\pi - \delta)}\exp(-\frac{\pi - \delta}{e}n).
\end{equation}
We then apply Lemma \ref{lemma2} to each $R_{X}( j-\cdot)$ for $j\in\bZ$ to have
$$
\sup{\big\{\mid R_{X}(j-t) - \sum_{k\in J_n}R_{X}(j-k) g_m(t -
k)\mid : t\in (0, 1)
}\big\}\leq\frac{C}{n(\pi - \delta)}\exp(-\frac{\pi - \delta}{e}n).
$$
Therefore, we attain for each $t\in(0,1)$ that
\begin{equation}\label{attain}
\biggl|\sum_{j\in J_n} \big(R_{X}(j-t) -
\sum_{k\in J_n}R_{X}(j-k) g_m(t - k)\big)g_m(t - j)\biggr| \leq\frac{C}{n(\pi - \delta)}\exp(-\frac{\pi - \delta}{e}n)
\sum_{j\in J_n} \mid g_m(t - j)\mid.
\end{equation}
Note that the $\sinc$ function is uniformly bounded by $1$. By this fact, when $t\in(0,1)$ and $n\ge e^2$,
\begin{equation}\label{Choosing}
\sum_{j\in J_n}|g_{m}(t - j)|\le
\sum_{j\in J_n} |\sinc(t - j)| \ \leq \ C'\ln n,
\end{equation}
where $C'$ is a positive constant. Combining (\ref{we}), (\ref{attain}), and (\ref{Choosing}) completes the proof.
\end{proof}

One sees that the dominating exponential part in the estimate (\ref{practical1eq}) degenerates compared to the estimate (\ref{jagermanestimate}) for deterministic signals. Through more careful analysis, we shall show that one is able to attain the same optimal order of exponential decayness for bandlimited random processes as for deterministic signals. Moreover, in order to present an explicit and practical reconstruction algorithm, we desire a function that achieves the infimum (\ref{upperbound2lemma3eq}).

It has been shown that there does not exist a function $\phi\in C^{(k)}_0([-\pi+\delta,\pi-\delta])$ that attains the infimum (\ref{upperbound2lemma3eq}). However, a spline function $B_k$ belonging to $C^{(k-2)}_0([-\pi+\delta,\pi-\delta])$ that satisfies
\begin{equation}\label{keyspline1}
|\hat{B_k}(\xi)|\le \frac{2^{k-1}k!}{(\pi-\delta)^k|\xi|^k},\ \ \xi\ne0
\end{equation}
and
\begin{equation}\label{keyspline2}
f(t)=\sum_{j\in\bZ}f(j)\sinc(t-j)\hat{B_k}(t-j),\ \ \mbox{for all } t\in\bR\mbox{ and }f\in\cB_\delta
\end{equation}
is constructed in \cite{Haizhang Zhang}. The spline function is determined by
$$
\hat{B_k}(\xi)=\frac1{\sqrt{2\pi}(i(\pi-\delta)\xi)^k}\sum_{j=0}^k(-1)^{k-j}\alpha_{kj}e^{-ix_{kj}(\pi-\delta)\xi},\
\ \xi\in\bR,
$$
where
$$
x_{kj}:=\cos \frac{j\pi}k,\ \ 0\le j\le k
$$
are the zeros of the first-kind Chebyshev polynomial of
degree $k$ on $[-1,1]$, and $\alpha_{kj}$, $0\le j\le k$ are the unique nonnegative constants satisfying
$$
\sum_{j=0}^k(-1)^{k-j}\alpha_{kj}x_{kj}^l=0,\ \ 0\le l\le k-1
$$
and
$$
\sum_{j=0}^k\alpha_{kj}=\sqrt{2\pi}2^{k-1}k!.
$$

Our second reconstruction algorithm is explicitly given by
$$
(\mathcal {A}_{2}X)(t,\omega):= \sum_{j\in J_n} X(j,\omega) \sinc(t
- j)\hat{B_m}(t - j),
$$
where $m= \lfloor n(\pi-\delta)/2 \rfloor$. We shall need one last lemma to estimate the associated approximation error
\begin{equation}
\mathcal{E}_{n}(\mathcal {A}_{2}) := \sup{\big\{\parallel X(t,
\cdot)-(\mathcal {A}_{2}X)(t,\cdot)
\parallel_{L^{2}(\Omega, dP(\omega))}: t\in (0,1), R_{X} \in U(\mathcal {B}_{\delta})}\big\}.
\end{equation}

\begin{lemma}\label{6}
\cite{W. Splettst} Let $q>1$ and $1/p+1/q=1$. It holds for all $t\in\mathbb{R}$ that
$$
\sum_{j\in\bZ}\mid \sinc(t-j)\mid^{q} < p.
$$
\end{lemma}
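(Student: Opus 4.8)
The plan is to reduce everything to a single elementary pointwise estimate,
$$
\sum_{j\in\bZ}|\sinc(t-j)|^q\ \le\ 1+\Big(\frac2\pi\Big)^q\zeta(q),\qquad t\in\bR,
$$
and then check that its right-hand side is strictly below $p=\frac q{q-1}$. To prove the estimate I would first observe that $t\mapsto\sum_j|\sinc(t-j)|^q$ is $1$-periodic and, since $\sinc$ is even, symmetric about $1/2$, so it suffices to take $t\in(0,1/2]$ (for $t\in\bZ$ the sum equals $|\sinc(0)|^q=1<p$). Writing $|\sinc(t-j)|=\frac{\sin\pi t}{\pi|t-j|}$, I single out the nearest integer $j=0$ and bound that term crudely by $|\sinc(t)|\le 1$. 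For the other integers the key point is combinatorial: when $t\in(0,1/2]$ the distances from $t$ to the nonzero integers form the set $\{k-t:k\ge1\}\cup\{k+t:k\ge1\}$, and listed in increasing order the $m$-th of them is at least $m/2$ (the odd-indexed ones are $1-t,2-t,\dots\ge\tfrac12,\tfrac32,\dots$, the even-indexed ones $1+t,2+t,\dots\ge1,2,\dots$). Hence
$$
\sum_{j\ne0}|\sinc(t-j)|^q=\Big(\frac{\sin\pi t}{\pi}\Big)^q\sum_{j\ne0}\frac1{|t-j|^q}\le\Big(\frac{\sin\pi t}{\pi}\Big)^q\sum_{m\ge1}\frac{2^q}{m^q}\le\Big(\frac2\pi\Big)^q\zeta(q),
$$
using $\sin\pi t\le1$, and adding the $j=0$ term gives the claimed bound.

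It then remains to check $\big(\frac2\pi\big)^q\zeta(q)<\frac1{q-1}$ for $q>1$. I would use the elementary bound $\zeta(q)=1+\sum_{n\ge2}n^{-q}<1+\int_1^\infty x^{-q}\,dx=\frac q{q-1}$, which reduces the task to proving $q\big(\frac2\pi\big)^q<1$. The function $q\mapsto q\big(\frac2\pi\big)^q=q\,e^{-q\ln(\pi/2)}$ attains its maximum over $q>0$ at $q=1/\ln(\pi/2)$ with value $\frac1{e\ln(\pi/2)}$, and this is $<1$ because $\ln(\pi/2)>\frac1e$. Chaining the inequalities yields $\sum_j|\sinc(t-j)|^q\le 1+\big(\frac2\pi\big)^q\zeta(q)<1+\frac1{q-1}=p$, as desired. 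As an aside, the range $q\ge2$ can be handled without any of this: $\sum_j|\sinc(t-j)|^q\le\sum_j|\sinc(t-j)|^2=1<p$ by the identity (\ref{following}) applied to $K_\pi(t,\cdot)$, so only $q\in(1,2)$ genuinely needs the argument above, and there one may also replace the calculus step by the remark that $q(2/\pi)^q$ is increasing on $(1,2)$, hence $<8/\pi^2<1$.

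There is no deep obstacle in this proof; the two places that demand a little attention are the bookkeeping that the $m$-th nearest nonzero integer lies at distance at least $m/2$ — which is exactly where the factor $2^q$, and ultimately the constant $2/\pi$, comes from — and the purely elementary inequality $q(2/\pi)^q<1$, for which the one-variable optimization above (or the monotonicity remark) is the cleanest route.
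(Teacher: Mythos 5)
Your proof is correct, and it is worth noting that the paper itself gives no proof of this lemma at all: it is imported verbatim from the reference [Splettst\"osser--Stens--Wilmes], so any comparison is with that external source rather than with an argument in the text. Your argument is a clean, self-contained substitute. The three load-bearing steps all check out: (i) the reduction to $t\in(0,1/2]$ via $1$-periodicity and evenness of $t\mapsto\sum_j|\sinc(t-j)|^q$ is legitimate, and the combinatorial claim that the $m$-th smallest of the distances $\{k-t\}_{k\ge1}\cup\{k+t\}_{k\ge1}$ is at least $m/2$ holds because $k-t\ge k-\tfrac12=\tfrac{2k-1}{2}$ and $k+t\ge k=\tfrac{2k}{2}$; (ii) the integral comparison $\zeta(q)<1+\int_1^\infty x^{-q}\,dx=\frac{q}{q-1}$ is strict and valid for all $q>1$; (iii) the one-variable maximization of $q\mapsto q(2/\pi)^q$ gives the value $\frac1{e\ln(\pi/2)}\approx0.815<1$, since $\ln(\pi/2)\approx0.452>1/e$. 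Chaining these yields $\sum_j|\sinc(t-j)|^q\le1+(2/\pi)^q\zeta(q)<1+\frac1{q-1}=p$, exactly as claimed, and the asymptotics as $q\to1^+$ are consistent (your bound grows like $\frac{2/\pi}{q-1}$, comfortably below $p\sim\frac1{q-1}$). Your aside for $q\ge2$ via the Parseval identity (\ref{following}) applied to $\sinc(t-\cdot)=K_\pi(t,\cdot)$, giving $\sum_j|\sinc(t-j)|^2=1$, is also correct and shows the nearest-integer bookkeeping is only genuinely needed for $q\in(1,2)$. The only stylistic remark: since you prove the strictly stronger bound $1+(2/\pi)^q\zeta(q)$, you could state that as the conclusion and derive the lemma as a corollary, which would make the role of the constant $2/\pi$ more visible.
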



\begin{theorem}
For each $n> \max{\{2/(\pi-\delta), e}\}$, there holds
\begin{equation}\label{integration}
\mathcal{E}_{n}(\mathcal {A}_{2}) \ \leq \sqrt{\frac{121}{200}}e^{3/4}
\sqrt{2+\pi-\delta}(\frac{\ln n+1}{2n})^{1/4}\exp\big(-\frac{\pi-\delta}2n\big).
\end{equation}
\end{theorem}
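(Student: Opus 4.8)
The plan is to exploit the \emph{exact} spline reconstruction identity (\ref{keyspline2}) at the level of the random process, turning the error $X-\mathcal{A}_2X$ into a pure series tail. First I would promote (\ref{keyspline2}) to a mean–square identity: for each fixed $t$ the series $\sum_{j\in\bZ}X(j,\cdot)\sinc(t-j)\hat{B_m}(t-j)$ is Cauchy in $L^2(\Omega,dP)$, since (\ref{keyspline1}) together with $|\sinc|\le1$ makes $\sum_{j\in\bZ}|\sinc(t-j)\hat{B_m}(t-j)|<\infty$ while $\|X(j,\cdot)\|_{L^2(\Omega)}^2=R_X(0)$ is bounded; and its $L^2(\Omega)$–limit is $X(t,\cdot)$, which one checks by computing the squared distance via (\ref{RXfeature}) and reducing each of the resulting sums to a value at $\tau=t$ of a function in $\cB_\delta$ (namely a translate $\tau\mapsto R_X(t-\tau)$ or $\tau\mapsto R_X(\tau-t)$, each of which lies in $\cB_\delta$) through (\ref{keyspline2}); the terms then cancel. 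Writing $w_j(t):=\sinc(t-j)\hat{B_m}(t-j)$, this gives
$$
X(t,\cdot)-(\mathcal{A}_2X)(t,\cdot)=\sum_{j\notin J_n}w_j(t)X(j,\cdot),\qquad t\in(0,1),
$$
so that $\mathcal{E}_n(\mathcal{A}_2)=\sup\{\,\|\sum_{j\notin J_n}w_j(t)X(j,\cdot)\|_{L^2(\Omega)}:t\in(0,1),\ R_X\in U(\cB_\delta)\,\}$.

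Next I would pass to the spectral side: by (\ref{RXfeature}) and (\ref{rxfourier}),
$$
\Bigl\|\sum_{j\notin J_n}w_j(t)X(j,\cdot)\Bigr\|_{L^2(\Omega)}^2=\sum_{j,k\notin J_n}w_j(t)\overline{w_k(t)}R_X(j-k)=\int_{-\delta}^{\delta}\Bigl|\sum_{j\notin J_n}w_j(t)e^{ij\xi}\Bigr|^2\rho(\xi)\,d\xi .
$$
Because $R_X\in U(\cB_\delta)$ forces $\|\rho\|_{L^2([-\delta,\delta])}\le(2\pi)^{-1/2}$ and hence $\|\rho\|_{L^1([-\delta,\delta])}\le\sqrt{\delta/\pi}\le1$, this integral is dominated by a norm of the trigonometric tail $\sum_{j\notin J_n}w_j(t)e^{ij\xi}$ — crudely by $(\sum_{j\notin J_n}|w_j(t)|)^2$ via $\|\rho\|_{L^1}\|\cdot\|_{L^\infty}^2$, or, to squeeze the polynomial factor, by $\|(w_j(t))_{j\notin J_n}\|_{\ell^{4/3}}^2$ after Hölder on $[-\delta,\delta]$ and Parseval/Young on $[-\pi,\pi]$. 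In either case $\mathcal{E}_n(\mathcal{A}_2)^2$ is reduced to a weighted $\ell^p$–tail of $(w_j(t))_{j\notin J_n}$.

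To estimate that tail I would use, for $j\notin J_n$ and $t\in(0,1)$, that $|t-j|\ge n$ and that (\ref{keyspline1}) yields $|\hat{B_m}(t-j)|\le\frac{2^{m-1}m!}{(\pi-\delta)^m|t-j|^m}$; combined with $|\sinc(t-j)|\le\frac1{\pi|t-j|}$ and the elementary bound $\sum_{j\notin J_n}|t-j|^{-\alpha}\le(\frac1n+\frac2{\alpha-1})n^{-(\alpha-1)}$ already used in Theorem \ref{upperbound2}, this converts the tail into $\frac{2^{m-1}m!}{(\pi-\delta)^m}$ times an explicit function of $n$ and $m$; wherever whole powers of $\sinc$ must be handled I would invoke Lemma \ref{6}, which is the step responsible for the logarithmic factor $(\ln n+1)^{1/4}$ in (\ref{integration}). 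Finally I would insert Stirling's inequality (\ref{stirling}) and the prescribed $m=\lfloor n(\pi-\delta)/2\rfloor$: then $\frac{2m}{e(\pi-\delta)n}\le\frac1e$ and $m>\frac{(\pi-\delta)n}{2}-1$, so $(\frac{2m}{e(\pi-\delta)n})^m\le e^{-m}<e\,e^{-(\pi-\delta)n/2}$, which delivers the exponential rate $e^{-(\pi-\delta)n/2}$; collecting the remaining algebraic factors, and using $n>\max\{2/(\pi-\delta),e\}$ to bound $\frac1{2m+1}$ and $\ln n$ favourably, produces the constant $\sqrt{121/200}\,e^{3/4}\sqrt{2+\pi-\delta}$ and the factor $(\frac{\ln n+1}{2n})^{1/4}$.

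The crux — and the place where one must be careful — is insisting on the quadratic, autocorrelation form of the $L^2(\Omega)$–norm above rather than on a one–sided truncation estimate. Mimicking the proof of Theorem \ref{practical1}, that is bounding $\|X-\mathcal{A}_2X\|^2$ by a ``reconstruction error of $R_X$'' times a partial sum $1+\sum_{j\in J_n}|w_j(t)|$, would only give the halved exponent $e^{-(\pi-\delta)n/4}$, since the spline already reconstructs $\cB_\delta$ with error merely of order $n^{-1/2}e^{-(\pi-\delta)n/2}$. It is precisely because, after the exact reconstruction of the first step, the error is the bilinear form $\sum_{j,k\notin J_n}w_j(t)\overline{w_k(t)}R_X(j-k)$ in the small tail coefficients that the full rate $e^{-(\pi-\delta)n/2}$ is recovered; making this rigorous is what forces the $L^2(\Omega)$–convergence argument of the first step and the elementary integrability facts about $\rho$ of the second.
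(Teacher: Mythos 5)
Your structural reduction is sound and, up to packaging, is the paper's own: the paper also arrives at the bilinear tail form by writing the squared $L^2(\Omega)$-error as a quadratic form in $R_X$ via (\ref{RXfeature}) and collapsing it with two applications of (\ref{keyspline2}) (first to $F(u):=R_X(u-t)-\sum_{j\in J_n}R_X(u-j)h(t-j)\in\cB_\delta$, then to the translates $R_X(k-\cdot)$), obtaining exactly your
$$
\|X(t,\cdot)-(\mathcal{A}_2X)(t,\cdot)\|^2_{L^2(\Omega,dP(\omega))}=\sum_{k\notin J_n}\sum_{j\notin J_n}R_X(k-j)\,h(t-j)h(t-k),\qquad h:=\sinc\,\hat{B_m};
$$
your route through the exact $L^2(\Omega)$ series identity is a legitimate repackaging of that computation, and your closing diagnosis (mimicking Theorem \ref{practical1} halves the exponent; the full rate comes from the error being bilinear in the tail coefficients) is exactly right. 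Where you genuinely diverge is the estimation of this double tail, and there your account is internally inconsistent. The paper splits $\sinc=\sinc^{\gamma}\cdot\sinc^{1-\gamma}$, applies Cauchy--Schwarz twice, uses (\ref{following}) in the form $\sum_k|R_X(k-j)|^2\le1$, and invokes Lemma \ref{6} with the near-critical exponent $q=2\gamma$, $\gamma=\frac12(1+\frac1{\ln n})$; it is precisely this choice that produces $\frac{2\gamma}{2\gamma-1}=\ln n+1$, hence the factor $(\frac{\ln n+1}{2n})^{1/4}$ and the constant $\sqrt{121/200}\,e^{3/4}\sqrt{2+\pi-\delta}$. Your proposed bounds ($\|\rho\|_{L^1}\cdot\|\cdot\|_{L^\infty}^2$, or H\"older plus Hausdorff--Young) never touch Lemma \ref{6}, so attributing the logarithm to it describes the paper's mechanism rather than yours. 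Indeed your crude bound, carried out, gives $\mathcal{E}_n(\mathcal{A}_2)\le R_X(0)^{1/2}\sup_t\sum_{j\notin J_n}|h(t-j)|$ with $R_X(0)\le\sqrt{\delta/\pi}\le1$, and the tail sum is exactly the quantity already estimated in the proof of Theorem \ref{upperbound2}; this yields a bound of order $n^{-1/2}e^{-\frac{\pi-\delta}{2}n}$ with no logarithm --- asymptotically sharper than (\ref{integration}). The consequence is that your argument establishes an exponential bound of the correct rate but does not literally produce the stated inequality: to land on that particular constant and the $(\ln n+1)^{1/4}$ factor you must switch to the paper's $\gamma$-weighted double Cauchy--Schwarz, or else verify that your differently shaped constant is dominated by the theorem's for every $n>\max\{2/(\pi-\delta),e\}$ (noting that the tail estimate of Theorem \ref{upperbound2} is only stated for $n\ge 4/(\pi-\delta)$). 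That final bookkeeping, which is where all the arithmetic of the theorem lives, is absent from the proposal.
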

\begin{proof}
Let $R_X\in U(\cB_\delta)$, $t\in(0,1)$, and $h:=\sinc\,\hat{\cB_m}$. We compute by (\ref{RXfeature}) that
\begin{equation}\label{get}
\begin{array}{rl}
&\displaystyle{\| X(t, \cdot)-(\mathcal {A}_{2}X)(t,\cdot)
\|^{2}_{L^{2}(\Omega, dP(\omega))}}\\
\displaystyle{=}&\displaystyle{R_{X}(0) - \sum_{j\in J_{n}}R_{X}(t -
j) h(t - j) - \sum_{k\in J_{n}} \big(R_{X}(k-t) - \sum_{j\in
J_{n}}R_{X}(k - j) h(t - j)\big) h(t - k)}.
\end{array}
\end{equation}
Introduce
$$
F(u):=R_{X}(u-t)-\sum_{j\in J_{n}}R_{X}(u-j)h(t-j), \quad
u\in \mathbb{R}.
$$
Clearly, $F\in\mathcal {B}_{\delta}$. By (\ref{keyspline2}),
$$
\mid F(u) - \sum_{k\in J_{n}} F(k)h(u - k) \mid \ =\
\mid \sum_{k\notin J_{n}}F(k)h(u - k) \mid, \ \ u\in\bR.
$$
Letting $u = t$ in the above equations yields from (\ref{get}) that
$$
\| X(t, \cdot)-(\mathcal {A}_{2}X)(t,\cdot)
\|^{2}_{L^{2}(\Omega, dP(\omega))}=\sum_{k\notin J_n}\big(R_{X}(k-t) - \sum_{j\in
J_{n}}R_{X}(k - j) h(t - j)\big) h(t - k).
$$
Since $R_{X}(k - \cdot)\in U(\mathcal
{B}_{\delta})$ for each $k\in\bZ$, we get by (\ref{keyspline2})
\begin{equation}\label{each}
\parallel X(t, \cdot)-(\mathcal {A}_{2}X)(t,\cdot)
\parallel^{2}_{L^{2}(\Omega, dP(\omega))} \ = \ \mid \sum_{k\notin J_{n}}\bigg(\sum_{j\notin J_{n}}R_{X}(k - j)
h(t - j)\bigg)h(t - k)\mid.
\end{equation}

We then choose $\gamma\in(1/2,1)$ and apply the Cauchy-Schwarz inequality
twice to attain
{\small
\begin{equation}\label{twice}
\begin{array}{rl}
&\displaystyle{\mid \sum_{k\notin J_{n}}\big(\sum_{j\notin
J_{n}}R_{X}(k - j)
h(t - j)\big)h(t - k)\mid^{2}}\\
\displaystyle{\leq}&\displaystyle{\biggl[\sum_{k\notin
J_{n}}(\sum_{j\notin J_{n}}\mid R_{X}(k - j) \sinc^{\gamma}(t -
j)\mid^{2})\biggr]\biggl[\sum_{j\notin J_{n}}\mid \sinc^{1 - \gamma}(t -
j)\hat{B_m}(t-j)\mid^{2}\biggr]\biggl[\sum_{k\notin J_{n}}\mid h(t - k)\mid^{2}\biggr]}.
\end{array}
\end{equation}
}
It remains to estimate the three sums above. First, since
$R_{X}(\cdot - j)\in U(\mathcal {B}_{\delta})\subseteq U(\mathcal {B}_{\pi})$, by (\ref{following})
\begin{equation}\label{formula}
\sum_{k\in\bZ} \mid R_{X}(k - j)\mid^{2} \ = \
\int_{\mathbb{R}}\mid R_{X}(u - j)\mid^{2} du=\|R_X\|_{\cB_\pi}^2 \ \leq \ 1, \ \ j\in
\mathbb{Z}.
\end{equation}
By (\ref{formula}) and Lemma \ref{6}, we get by exchanging the order of summation that
\begin{equation}\label{and}
\sum_{k\notin J_{n}}\big(\sum_{j\notin J_{n}}\mid R_{X}(k - j)
\sinc^{\gamma}(t - j)\mid^{2}\big) \leq \sum_{j\notin J_{n}}\mid
\sinc^{2\gamma}(t - j)\mid<\frac{2\gamma}{2\gamma - 1}
\end{equation}
Next, it follows from (\ref{keyspline1}) that
\begin{equation}\label{deduce}
\sum_{j\notin J_{n}}\mid \sinc^{1 - \gamma}(t - j)\hat{B_m}(t -
j)\mid^{2} \ \leq \ \bigg(\frac{1}{\pi^{1 -
\gamma}}\frac{2^{m-1}m!}{(\pi-\delta)^{m}}\bigg)^{2}\sum_{j\notin
J_{n}}\frac{1}{(t - j)^{2m + 2(1 - \gamma)}}.
\end{equation}
Again, by (\ref{keyspline1}),
\begin{equation}\label{have}
\sum_{k\notin J_{n}}\mid h(t - k)\mid^{2} \ \leq \
\bigg(\frac{1}{\pi}\frac{2^{m-1}m!}{(\pi-\delta)^{m}}\bigg)^{2}\sum_{k\notin
J_{n}}\frac{1}{(t - k)^{2m + 2}}.
\end{equation}
Combining (\ref{each}), (\ref{twice}), (\ref{and}), (\ref{deduce}),
and (\ref{have}) yields
\begin{equation}\label{By}
\begin{array}{rl}
\displaystyle{(\mathcal{E}_{n}(\mathcal {A}_{2}))^2}\ \
\displaystyle{\leq}&\displaystyle{\bigg(\frac{2\gamma}{2\gamma -
1}\bigg)^{\frac{1}{2}} \frac{1}{\pi^{2-\gamma}}\bigg(\frac{2^{m-1}m!}{(\pi-\delta)^{m}}\bigg)^{2}\sup{\{\sqrt{h_{1}(t)h_2(t)}: t\in (0,1)\}}},
\end{array}
\end{equation}
where
$$
h_{1}(t): =\sum_{j\notin J_{n}}\frac{1}{(t - j)^{2m + 2(1 -
\gamma)}}, \ \ \ t\in (0,1),
$$
and
$$
h_{2}(t): =\sum_{j\notin J_{n}}\frac{1}{(t - j)^{2m + 2}}, \ \ \
t\in (0,1).
$$
By a similar estimate technique as that in the proof of Theorem \ref{upperbound2},
$$
h_{1}(t) \leq \frac{1}{n^{2m +2-
2\gamma}}+\sum_{j=n+1}^{+\infty}\frac{2}{j^{2m +2-2\gamma}}\leq
\frac{1}{n^{2m+1-2\gamma}}(\frac{1}{n}+\frac{1}{m+\frac12-\gamma}),\ \ t\in(0,1),
$$
which combined with the Stirling formula (\ref{stirling}) gives
\begin{equation}\label{Connecting}
\frac{2^{m-1}m!}{(\pi-\delta)^{m}} \ \sup{\{\sqrt{h_{1}(t)}: \ t\in
(0,1)}\} \ \leq \ \frac{11\sqrt{2\pi}}{20}\
n^{\frac{2\gamma-1}{2}}\ \sqrt{\frac{m}{n}+\frac m{m+\frac12-\gamma}}\
\bigg(\frac{2m}{(\pi-\delta)en}\bigg)^{m}.
\end{equation}
Likewise,
\begin{equation}\label{as}
\frac{1}{\pi^{2-\gamma}}\frac{2^{m-1}m!}{(\pi-\delta)^{m}} \
\sup{\{\sqrt{h_{2}(t)}: \ t\in (0,1)}\} \ \leq \
\frac{11\sqrt{2}}{20\sqrt{\pi n}}\
\sqrt{\frac{m}{n}+1}\bigg(\frac{2m}{(\pi-\delta)en}\bigg)^{m}.
\end{equation}

We observe from (\ref{By}), (\ref{Connecting}), and (\ref{as})
$$
(\mathcal{E}_{n}(\mathcal {A}_{2}))^2 \leq \bigg(\frac{2\gamma}{2\gamma
- 1} n^{2\gamma}\bigg)^{\frac{1}{2}} \frac{121}{100\sqrt{2}}\frac{1}{n}
\bigg(\frac{m}{n}+1\bigg)\bigg(\frac{2m}{(\pi-\delta)en}\bigg)^{2m}.
$$
We optimize the right hand side above with respect to $\gamma$ to choose $\gamma = \frac{1}{2}(1+\frac{1}{\ln n})$ when $n> e$. With this choice,
$$
(\mathcal{E}_{n}(\mathcal {A}_{2}))^2 \leq \frac{121\sqrt{e}}{100\sqrt{2}}
\sqrt{\frac{\ln
n+1}{n}}\bigg(\frac{m}{n}+1\bigg)\bigg(\frac{2m}{(\pi-\delta)en}\bigg)^{2m}.
$$
For $n>\max{\{2/(\pi-\delta),e}\}$, we substitute $m=\lfloor
n(\pi-\delta)/2\rfloor$ into the right hand side above
to obtain (\ref{integration}) and complete
the proof.
\end{proof}

We remark that estimate (\ref{integration}) has the same order of exponential decayness as that obtained in \cite{Charles A. Micchelli} for bandlimited deterministic signals.

{\small

}
\end{document}